\renewcommand{\leq}{\leqslant}
\renewcommand{\geq}{\geqslant}
\newcommand{\supp}{\operatorname{supp}}
\newcommand{\diag}{\operatorname{diag}}
\newtheorem{proposition}{Proposition}
\newtheorem{theorem}{Theorem}
\newtheorem{lemma}{Lemma}
\newtheorem{definition}{Definition}
\begin{document}

\title[PML on manifolds with quasicylindrical ends]{Analysis of Perfectly Matched Layer operators for acoustic scattering on manifolds with quasicylindrical ends}
\author{Victor Kalvin}
\address{Department of Mathematics and Statistics, Concordia University, 1455 de
Maisonneuve West, Montreal, H3G 1M8 Quebec, Canada}

\email{vkalvin@gmail.com}

\begin{abstract} In this paper we prove stability and exponential convergence of the Perfectly Matched Layer (PML)
method for acoustic scattering on manifolds with axial analytic
quasicylindrical ends. These manifolds model long-range geometric
perturbations (e.g. bending or stretching) of tubular waveguides
filled with homogeneous or inhomogeneous media.

We construct non-reflective infinite PMLs replacing the metric on a
part of the manifold by a non-degenerate complex symmetric tensor
field. We prove that the problem with PMLs of finite length is
uniquely solvable and solutions to this problem locally approximate
scattered solutions with an error that exponentially tends to zero
as the length of PMLs tends to infinity.
\end{abstract}

\maketitle

\noindent{\bf Keywords:} Acoustic Scattering, Non-selfadjoint
Operators, Essential Spectrum, Compound Expansions, PML,
Inhomogeneous Media \vspace{.2cm}

\noindent {\bf MSC: } primary 58J50, 58J32, 58J90; secondary 65N12
\vspace{.2cm}

\noindent {\bf Journal:} J. Math. Pures Appl.,
http://dx.doi.org/10.1016/j.matpur.2012.12.001

\section{Introduction}
The motivation of this work comes from the problem of numerical
modeling of acoustic scattering in tubular waveguides geometrically
perturbed up to infinity (e.g. bent or stretched) and filled with
homogeneous or inhomogeneous media. In order to obtain a good
approximation of scattered waves by numerical solutions of a problem
with finite computational domain, waveguides should be truncated
without creating excessive reflections from the artificial boundary
of truncation. The idea is to place in front of the boundary of
truncation a layer strongly absorbing the scattered waves. Due to
the strong attenuation, the homogeneous Dirichlet boundary condition
is a suitable boundary condition on the boundary of truncation. This
truncation scheme supplemented with very special construction of the
layer is widely known as the Perfectly Matched Layer (PML) method,
originally introduced in~\cite{ref1}. The method is in common use
for numerical analysis of a wide class of problems. For some of
them, stability and convergence of the method have been proved
mathematically;~e.g.~\cite{ref2,ref4,Kalvin
SIMA,KalvinSiNum,KimPasciak2,LASSAS}. In the present paper we
develop the PML method for Neumann Laplacians on manifolds with
axial analytic quasicylindrical ends and prove  stability and
exponential convergence of the method. Neumann Laplacians model the
scattering problem  described in the beginning of introduction, see,
e.g.,~\cite{MatPar}.

As is known, construction of PMLs is closely related to  complex
scaling. Complex scaling involves complex dilation of variables and
has a long tradition in mathematical physics and numerical
analysis~\cite{Cycon,Simon Reed iv}. In this paper we construct PMLs
in a different way. Instead of complex dilation of variables we
replace the metric on a part of the manifold by a non-degenerate
complex symmetric tensor field. This approach is close in spirit
to~\cite{LASSAS} and can be understood as a deformation of the
Remanian geometry by means of the complex scaling. As a result of
this deformation all formulas for the quadratic form and for
coordinate representations of the Neumann Laplacian turn into the
corresponding formulas for the quadratic form and the
non-selfadjoint operator describing infinite PMLs. This essentially
simplifies construction and tractability of the formulas. Let us
stress here that due to variation of the metric along
quasicylindrical ends not only the Laplace-Beltrami operator but
also the Neumann boundary condition should be changed in PMLs. This
new effect leads to significant difficulties in analysis of the PML
method.

Relying on ideas of the Aguilar-Balslev-Combes-Simon theory of
resonances~\cite{Cycon,Simon Reed iv} we establish a limiting
absorption principle. As is typically the case, scattered solutions
satisfying the limiting absorption principle locally coincide with
solutions to the problem with infinite PMLs. The latter solutions
are of some exponential decay at infinity. These results are mainly
based on localization of the essential spectrum of non-selfadjoint
operators corresponding to the problem with infinite PMLs. Thanks to
the exponential decay of solutions in PMLs we can establish
stability and exponential convergence of the PML method by using
compound expansions. This is a further development of our scheme for
analysis of stability and exponential convergence of the PML
method~\cite{KalvinSiNum,Kalvin SIMA}, see also~\cite{KalvinCCM}.
The added difficulties are due to the new effect mentioned above. To
overcome these difficulties we develop our approach to construction
of PMLs and use non-homogeneous boundary value problems in
localization of the essential spectrum and in compound expansions.

This paper is organized as follows. In Section~\ref{s2} we introduce
manifolds with axial analytic quasicylindrical ends and consider an
illustrative example. Section~\ref{sCS} is devoted to construction
of infinite PMLs. In Section~\ref{s5} we localize the essential
spectrum of the operator modeling infinite PMLs conjugated with
exponent. In Section~\ref{s6} we establish a limiting absorption
principle and show that outgoing and incoming solutions are of some
exponential decay in PMLs. Finally, in Section~\ref{s7} we study the
problem with finite PMLs and prove stability and exponential
convergence of the PML method.

\section{Manifolds with axial analytic quasicylindrical ends}\label{s2}
Let $\Omega$ be a  compact (not necessarily simply connected)
$n$-dimensional manifold with smooth boundary $\partial\Omega$.
Denote by $\Pi$ the semi-cylinder $\Bbb R_+\times \Omega$, where
$\Bbb R_+$ is the positive semi-axis, and $\times$ stands for the
Cartesian product. Consider an oriented connected $n+1$-dimensional
manifold $\mathcal M$ representable in the form $\mathcal M=\mathcal
M_c\cup \Pi$, where $\mathcal M_c$ is a  compact manifold with
smooth boundary, see
 Fig.~\ref{manif}. We also assume that the boundary $\partial\mathcal
 M$ of $\mathcal M$ is smooth.
\begin{figure}
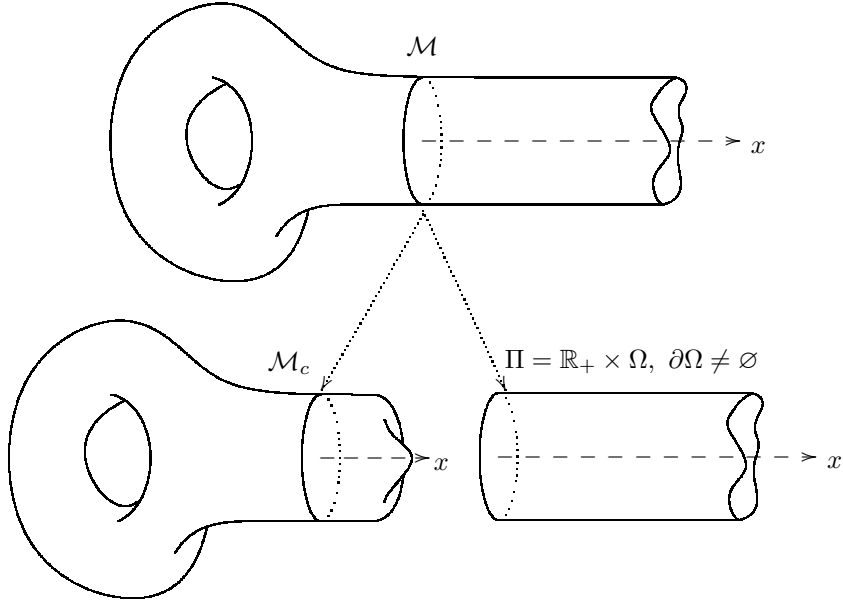
\centering
\[\xy0;/r.20pc/: (33,50)*{\xy
(40,10);(-18,-11)**\crv{(0,10)&(-13,10)&(-20,15)&(-30,25)&(-50,15)&(-50,-15)&(-30,-25)&(-20,-20)};
(38,-10);(-23,-15)**\crv{(0,-10)&(-10,-10)&(-20,-10)}; (-32,-10);
(-33,10)**\crv{(-25,-7)&(-25,8)}; (-29,-7);
(-31,9)**\crv{(-32,-9)&(-39,-2)&(-37,6)}; (40,10);
(38,-10)**\crv{(43,9)&(39,5)&(42,2)&(39,-2)&(42,-9)}; (40,10);
(38,-10)**\crv{(38,9)&(37,7)&(35,5)&(41,-2)&(35,-5)&(37,-10)};
{\ar@{-->} (0,0)*{}; (50,0)*{}}; (53,-1)*{x};
(0,0)*\ellipse(3,10){.}; (0,0)*\ellipse(3,10)__,=:a(-180){-};
(0,15)*{\mathcal M};
\endxy};
(0,0)*{\xy (8,10);
(-18,-11)**\crv{(0,10)&(-13,10)&(-20,15)&(-30,25)&(-50,15)&(-50,-15)&(-30,-25)&(-20,-20)};
(8,-10); (-23,-15)**\crv{(0,-10)&(-10,-10)&(-20,-10)}; (-32,-10);
(-33,10)**\crv{(-25,-7)&(-25,8)}; (-29,-7);
(-31,9)**\crv{(-32,-9)&(-39,-2)&(-37,6)}; (-5,15)*{\mathcal M_c};
(10,-7); (10,6)**\crv{(10,-6)&(14,-2)&(15,1)&(10,4)&(10,7)}; (8,10);
(13,2)**\crv{(12,10)}; (8,-10); (13,-3)**\crv{(12,-10)}; {\ar@{-->}
(0,0)*{}; (17,0)*{}}; (19,-1)*{x}; (0,0)*\ellipse(3,10){.};
(0,0)*\ellipse(3,10)__,=:a(-180){-};
\endxy};
(70,4)*{\xy (0,10); (40,10)**\dir{-};(0,-10); (38,-10)**\dir{-};
(0,0)*\ellipse(3,10){.}; (0,0)*\ellipse(3,10)__,=:a(-180){-};
(21,15)*{\Pi=\Bbb R_+\times\Omega,\
\partial\Omega\neq\varnothing};(0,15)*{\vphantom{\mathcal M_c}};
(40,10); (38,-10)**\crv{(43,9)&(39,5)&(42,2)&(39,-2)&(42,-9)};
(40,10);
(38,-10)**\crv{(38,9)&(37,7)&(35,5)&(41,-2)&(35,-5)&(37,-10)};
{\ar@{-->} (0,0)*{}; (50,0)*{}}; (53,-1)*{x};
(0,0)*\ellipse(3,10){.}; (0,0)*\ellipse(3,10)__,=:a(-180){-};
\endxy};
{\ar@{.>} (31,39)*{};(15,11)*{}};{\ar@{.>} (31,39)*{};(44,11)*{}};
\endxy\]
\caption{Representation $\mathcal M=\mathcal
M_{c}\cup\Pi$.}\label{manif}
\end{figure}

Let $\mathsf g$ 
be a Riemannian metric on $\mathcal M$. We identify the cotangent
bundle $\mathrm T^*\Pi$ with the tensor product $\mathrm T^*\Bbb
R_+\otimes\mathrm T^*\Omega$   via the natural isomorphism induced
by the product structure on $\Pi$. Then
\begin{equation}\label{split}
\mathsf g\!\upharpoonright_{\Pi}=\mathfrak g_0 dx\otimes dx+2
\mathfrak g_1\otimes dx +\mathfrak g_2, \quad\mathfrak g_k(x)\in
C^\infty\mathrm T^*\Omega^{\otimes k},\ x\in\Bbb R_+.
\end{equation}

Denote by  $\Bbb C\mathrm T^*\Omega^{\otimes k}$  the tensor power
of the complexified cotangent bundle $\Bbb C\mathrm T^*\Omega$ with
fibers $\Bbb C\mathrm T_{y}^*\Omega=\mathrm T_{
y}^*\Omega\otimes\Bbb C$. In what follows $C^m$ stands for sections
of complexified bundles. We equip the space $C^1\mathrm
T^*\Omega^{\otimes k}$ with the norm
\begin{equation}\label{Enorm}
\|\cdot\|_{\mathfrak e}=\max_{ y\in\Omega}\bigl(|\cdot|_{\mathfrak
e}(y)+|\nabla\cdot|_{\mathfrak e}(y)\bigr),
\end{equation}
where $\mathfrak e$ is a Riemannian metric on $\Omega$,
$|\cdot|_{\mathfrak e}(y)$ is the norm in  $\Bbb C\mathrm T_{
y}^*\Omega^{\otimes k}$, and $\nabla: C^1\mathrm T^*\Omega^{\otimes
k}\to C^0\mathrm T^*\Omega^{\otimes k+1}$ is the Levi-Civita
connection on $(\Omega,\mathfrak e)$. The norms induced by different
metrics $\mathfrak e$  are equivalent.

\begin{definition}\label{ACE}
We say that $(\mathcal M,\mathsf g)$ is a manifold with axial
analytic quasicylindrical end $(\Pi,\mathsf
g\!\upharpoonright_{\Pi})$, if the following conditions hold:
\begin{itemize}
\item[i.]  The coefficients $\Bbb R_+\ni x\mapsto \mathfrak g_k(x)\in C^\infty\mathrm T^*\Omega^{\otimes
k}$ in~\eqref{split} extend by analyticity from the semi-axis $\Bbb
R_+$ to the sector $\Bbb S_\alpha=\{z\in\Bbb C:|\arg
z|<\alpha<\pi/4\}$.
\item[ii.] The values $\|\mathfrak
g_0(z)-1\|_{\mathfrak e}$ , $\|\mathfrak g_1(z)\|_{\mathfrak e}$,
and $\|\mathfrak g_2(z)-\mathfrak h\|_{\mathfrak e}$  converge to
zero uniformly in $z\in\Bbb S_\alpha$ as $|z|\to\infty$, where
$\mathfrak h$ is a  metric on $\Omega$.


\end{itemize}

\end{definition}
Long-range geometric perturbations of tubular waveguides (i.e. of
manifolds $(\mathcal M,\mathsf g)$ with $\mathsf
g\!\upharpoonright_\Pi=dx\otimes dx+\mathfrak h$) are included into
consideration as there are no assumptions on the rate of convergence
  in condition ii of Definition~\ref{ACE}. Let us
give some illustrative examples of  manifolds  with axial analytic
quasicylindrical ends.

Let $ \Omega$ be a bounded domain in $\Bbb R^n$ with smooth boundary
($\Omega$ is a line segment if $n=1$). By $(x,y)$ and $(s,t)$ we
denote the Cartesian coordinates in $\Bbb R^{n+1}$, where
$x,s\in\Bbb R$ and $y,t\in\Bbb R^n$. Consider a closed domain
$\mathcal M$ with  smooth boundary such that $\{(x,y)\in {\mathcal
M}: x\leq 0\}$ is a bounded subset of the half-space $\{(x,y)\in
\Bbb R^{n+1}: x>-2\}$ and  $\{(x,y)\in {\mathcal M}:x> 0\}$ is the
semi-cylinder $\Pi$. Let
$$
\mathcal G=\{(s,t)\in\Bbb R^{n+1}: (s,t)=\phi(x,y), (x,y)\in\mathcal
M\}
$$
be the image of $\mathcal M$ under a diffeomorphism $\phi$
satisfying the following conditions:
\begin{enumerate}
\item[\it i.] The
function $x\mapsto \phi(x,\cdot)\in C^\infty(\Omega)$ has an
analytic continuation from $\mathbb R_+$ to the sector $\Bbb
S_\alpha$;
 \item[\it ii.] The element in the row $\ell$ and column $m$ of the matrix $(\phi'(z,\cdot))^t\phi'(z,\cdot)$,
 where $(\phi')^t$ is the transpose of the  Jacobian  $\phi'$,  uniformly tends to
 the Kronecker delta $\delta_{\ell m}$ in the space $C^1(\Omega)$ as $|z|$ tends to
infinity, $z\in\Bbb S_\alpha$.
\end{enumerate}
 For instance, we can take
 \begin{equation*}
\phi(x,y)=\left( x,(x+3)^\beta a+ (1+(x+3)^{\gamma} )y\right),\quad
a\in\Bbb R^n,\ \beta<1,\ \gamma<0.
\end{equation*}
Then the boundary $\partial\mathcal G$ approaches at infinity the
bent semi-cylinder $\{(s,t): s\in\Bbb R_+, t-(s+3)^\beta
a\in\Omega\}$. If we take
\begin{equation*}
\phi(x,y)=\left(\int_0^x \bigl(1+1/\log(\tilde x+4)\bigr)\, d\tilde
x, \bigl(1+1/\log(x+5)\bigr)y\right),
\end{equation*}
then $\partial\mathcal G$ slowly approaches at infinity the
semi-cylinder $\Bbb R_+\times\partial\Omega$, cf. Fig.~\ref{domain}.
\begin{figure}[h]
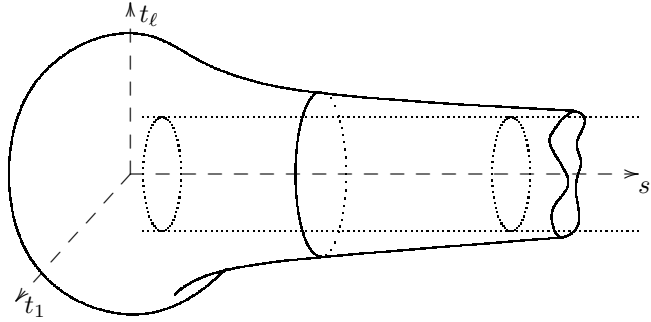

\[\xy0;/r.20pc/:  (40,10);(-15,-15)**\crv{(0,12)&(-13,15)&(-20,18)&(-30,25)&(-50,15)&(-50,-15)&(-30,-25)&(-20,-20)};
(38,-10);(-23,-19)**\crv{(0,-13)&(-10,-14)&(-20,-16)};
(40,10); (38,-10)**\crv{(43,9)&(39,5)&(42,2)&(39,-2)&(42,-9)};
(40,10);
(38,-10)**\crv{(38,9)&(37,7)&(35,5)&(41,-2)&(35,-5)&(37,-10)};
{\ar@{-->} (-30,0)*{}; (50,0)*{}}; (51,-2)*{s};
(0,0)*\ellipse(4,13){.}; (0,0)*\ellipse(4,13)__,=:a(-180){-};
{\ar@{-->} (-30,0)*{}; (-30,27)*{}}; (-27,25)*{t_\ell}; {\ar@{-->}
(-30,0)*{}; (-48,-20)*{}}; (-45,-21)*{t_1};
{\ar@{.}(-28,9); (50,9)}; {\ar@{.}(-28,-9); (50,-9)};
(-25,0)*\xycircle(3,9){.}; (30,0)*\xycircle(3,9){.};
\endxy\]
\caption{Domain $\mathcal G\subset \Bbb R^{n+1}$ with
quasicylindrical end.}\label{domain}
\end{figure}
Time-harmonic acoustic scattering at angular frequency $\omega$ in
the domain $\mathcal G$ filled by an inhomogeneous medium with
anisotropic density tensor $ \rho$ and bulk modulus $K$ is modeled
by the Neumann Laplacian on $(\mathcal G, K\rho)$ with spectral
parameter $\mu=\omega^2$, where $K\rho$ is a metric on $\mathcal G$,
e.g.,~\cite{MatPar}. Under certain assumptions on regularity of
$\rho$ and $K$ the pullback $\mathsf g$ of  $K\rho$ by the
diffeomorphism
 $\phi$ satisfies the conditions of
Definition~\ref{ACE}. These assumptions are apriori met for
homogeneous isotropic media, i.e. the pullback $\mathsf g$ of the
Euclidean metric $K\rho$ by the diffeomorphism $\phi$ satisfies the
conditions of Definition~\ref{ACE}. To study the Neumann Laplacian
on $(\mathcal G,K\rho)$ is the same as to study the Neumann
Laplacian on $(\mathcal M,\mathsf g)$. For our aims it is natural
and convenient to work on manifolds with axial analytic
quasicylindrical ends.

\section{Construction of infinite PMLs}\label{sCS}
Our approach here is very close in spirit to~\cite{LASSAS}. Let
$\mathsf s_{r}(x)=\mathsf s(x-{r})$, where ${r}>0$ is sufficiently
large and $\mathsf s\in C^\infty(\Bbb R)$ possesses the properties:
\begin{equation}\label{ab}
\begin{aligned}
&\mathsf s(x)=0 \text{ for all } x\leq 1,\\
&0\leq \mathsf s'(x)\leq 1 \text{ for all } x\in\Bbb R, \text{ and }
\mathsf s'(x)= 1 \text{ for large
 } x>0;
\end{aligned}
\end{equation}
here $\mathsf s'=\partial \mathsf s/\partial x$. Let  $\mathrm
T'^*\Bbb S_\alpha$ be the holomorphic cotangent bundle $\{(z,
c\,dz):z\in\Bbb S_\alpha, c\in\Bbb C\}$, where $dz=d\Re z+id\Im z$.
Consider the tensor field
\begin{equation}\label{TF}
 \mathfrak g_0 dz\otimes dz +2\mathfrak g_1\otimes dz+\mathfrak g_2\in C^\infty({\mathrm T}'^*\Bbb
S_\alpha\otimes{\mathrm T}^*\Omega )^{\otimes 2}
\end{equation}
with analytic coefficients $\Bbb S_\alpha\ni z\mapsto \mathfrak
g_k(z)\in C^\infty \mathrm T^*\Omega^{\otimes k}$, cf.
Definition~\ref{ACE}. For all values of $\lambda$ in the disk
\begin{equation}\label{disk}
\mathcal O_\alpha=\{\lambda\in\mathbb C: |\lambda|<\sin\alpha<
1/\sqrt{2}\}
\end{equation}
the function $\mathsf s_{r}$  defines the embedding
\begin{equation}\label{emb}
\mathrm T^*\Bbb R_+\ni (x, a\,dx)\mapsto \bigl(x+\lambda \mathsf
s_{r}(x), a(1+\lambda \mathsf s'_{r}(x))^{-1} d z\bigr)\in \mathrm
T'^*\Bbb S_\alpha,
\end{equation}
where $|1+\lambda \mathsf s'_{r}(x)|>1-1/\sqrt{2}$.  This embedding
together with~\eqref{TF} induces the tensor field
\begin{equation}\label{TFL}
\mathsf g_\lambda\!\upharpoonright_\Pi=\mathfrak g^{r}_{0,\lambda}
dx\otimes dx +2\mathfrak g^{r}_{1,\lambda} \otimes dx +\mathfrak
g^{r}_{2,\lambda}\in C^\infty {\mathrm T}^* \Pi^{\otimes 2},
\end{equation}
where $\mathfrak g^{r}_{k,\lambda}(x)\in C^\infty \mathrm
T^*\Omega^{\otimes k}$ are  smooth in $x\in\Bbb R_+$ and analytic in
$\lambda\in\mathcal O_\alpha$ coefficients given  by
$$
 \mathfrak
g^{r}_{k,\lambda}(x)=(1+\lambda \mathsf s'_{r}(x))^{2-k}\mathfrak
g_{k}(x+\lambda \mathsf s_{r}(x)).
$$
Note that $\mathsf g_\lambda\!\upharpoonright_\Pi$ with
$\lambda\in\mathcal O_\alpha\cap\Bbb R$ is the pullback of the
metric $\mathsf g\!\upharpoonright_\Pi$ by  the diffeomorphism
$\varkappa_\lambda (x,y)=(x+\lambda \mathsf s_{r}(x),y)$ scaling the
semi-cylinder $\Pi$ along its axis $\Bbb R_+$. Since $ \mathsf
s_{r}$ is supported on $({r},\infty)$, the equality $\mathsf
g_\lambda\!\upharpoonright_{(0,{r})\times\Omega}=\mathsf
g\!\upharpoonright_{(0,{r})\times\Omega}$ holds for all
$\lambda\in\mathcal O_\alpha$. We extend $\mathsf
g_\lambda\!\upharpoonright_{\Pi}$  to $\mathcal M$ by setting
$\mathsf g_\lambda\!\upharpoonright_{\mathcal M\setminus\Pi}=\mathsf
g\!\upharpoonright_{\mathcal M\setminus\Pi}$. As a result we obtain
the analytic function
$$
\mathcal O_\alpha\ni\lambda\mapsto \mathsf g_\lambda \in C^\infty
\mathrm T^*\mathcal M^{\otimes 2}.
$$
Clearly, $\mathsf g_0=\mathsf g$ and~\eqref{TFL} with $\lambda=0$ is
the same as~\eqref{split}. By analyticity in $\lambda$ we conclude
that $\mathsf g_\lambda$ is a symmetric tensor field. The Schwarz
reflection principle gives $\overline{\mathsf g_\lambda}=\mathsf
g_{\overline{\lambda}}$. It must be stressed that $\mathsf
g_\lambda$ with $\lambda\neq 0$ depends on  ${r}$, however we do not
indicate this for brevity of notations.  As ${r}$ is sufficiently
large, $\mathsf g_\lambda$ is non-degenerate; i.e. for
 $p\in\mathcal M$ and any nonzero $a\in \Bbb
C\mathrm T_p\mathcal M$ there exists $b\in \Bbb C\mathrm T_p\mathcal
M$ such that $\mathsf g^p_\lambda[a,b]\neq 0$, where $\mathsf
g_\lambda^p[\cdot,\cdot]$ is the  sesquilinear form naturally
defined by $\mathsf g_\lambda$. Indeed, on $\mathcal
M\setminus({r},\infty)\times\Omega$ the tensor field $\mathsf
g_\lambda$ is non-degenerate because it coincides there with the
metric $\mathsf g$, while on  $({r},\infty)\times\Omega$ it is only
little different from the non-degenerate tensor field
$(1+\lambda\mathsf s_{r}')^2dx\otimes dx+\mathfrak h$,
see~\eqref{TFL} and condition ii of Definition~\ref{ACE}. In
particular, $\mathsf g_\lambda\upharpoonright_\Pi$ coincides with
$(1+\lambda\mathsf s_{r}')^2dx\otimes dx+\mathfrak h$ in the case of
manifold with cylindrical end, i.e. in the case  $\mathsf
g\upharpoonright_\Pi=dx\otimes dx+\mathfrak h$.

It is well known that a metric induces the musical isomorphism
between tangent and cotangent bundles. Similarly,
 $\mathsf g_\lambda$ induces the fiber isomorphism
$$
\Bbb C\mathrm T^*_p\mathcal M\ni\xi \mapsto {^\lambda\sharp} \xi\in
\Bbb C \mathrm T_p\mathcal  M,
$$
where ${^\lambda\sharp} \xi$ is a unique vector satisfying the
equality $ \xi\overline{a}=\mathsf g_\lambda^p [{^\lambda\sharp}
\xi,a]$ for all $a \in \Bbb C \mathrm T_p\mathcal  M$. We extend
$\mathsf g_\lambda^p[\cdot,\cdot]$ to the pairs $(\xi,\eta)\in {\Bbb
C{\mathrm T}^*_{p}\mathcal M}\times {\Bbb C{\mathrm T}^*_{p}\mathcal
M}$ by setting
\begin{equation*}
{\mathsf g_\lambda^p} [\xi,\eta]={\mathsf g_\lambda^p}
[{^\lambda\sharp}\,\xi,{^{\bar{\lambda}}\sharp}\,\eta], \quad
\lambda\in\mathcal O_\alpha.
\end{equation*}
 If $\lambda$ is real, then ${\mathsf g_\lambda^p} [\cdot,\cdot]$ is the
positive Hermitian form induced by the  metric $\mathsf g_\lambda$.
The corresponding volume form $\operatorname{dvol}_{\lambda}$
extends by analyticity to all $\lambda\in\mathcal O_\alpha$.
Introduce the  deformed global inner product
\begin{equation*}
(\xi,\omega)_\lambda=\int_{\mathcal M} \mathsf g_\lambda[\xi,\omega]
\,\operatorname{dvol}_{\lambda},\quad \lambda\in\mathcal O_\alpha,
\quad \xi,\omega\in C_c^\infty\mathrm T^*\mathcal M^{\otimes
k},\quad k=0,1.
\end{equation*}
Let us stress that  for non-real $\lambda\in\mathcal O_\alpha$ the
form $\operatorname{dvol}_{\lambda}$ is complex-valued and the
deformed inner product
$(\xi,\omega)_\lambda=\overline{(\omega,\xi)_{\bar{\lambda}}}$ is
not Hermitian. Let us consider the sesquilinear quadratic form
\begin{equation*}
\mathsf
q_\lambda[u,u]=\bigl(du,d(\varrho_{\bar{\lambda}}\,u)\bigr)_\lambda,\quad
u\in C_c^\infty(\mathcal M),\quad\lambda\in\mathcal O_\alpha,
\end{equation*}
where $d: C_c^\infty(\mathcal M)\to C_c^\infty \mathrm T^*\mathcal
M$ is the exterior derivative and $\varrho_\lambda\in
C^\infty(\mathcal M)$ is such that
$\varrho_\lambda\operatorname{dvol}_{\lambda}=\operatorname{dvol}_{0}$.
\begin{lemma}\label{L1} Introduce the space $L^2(\mathcal M)$
and the Sobolev space $H^1(\mathcal M)$
 as completion of the set $C_c^\infty(\mathcal M)$ with respect to the
 norms
$$
\|u\|=\sqrt{(u,u)_0}, \quad \|u\|_{ H^1(\mathcal
M)}=\sqrt{(du,du)_0+(u,u)_0}
$$
respectively. Then the family $\mathcal O_\alpha\ni\lambda\mapsto
\mathsf q_\lambda$ of unbounded quadratic forms in $L^2(\mathcal M)$
with domain $H^1(\mathcal M)$ is analytic in the sense of
Kato~\cite{Kato,Simon Reed iv}; i.e. $\mathsf q_\lambda$ is a closed
densely defined sectorial form, and the function $\mathcal
O_\alpha\ni\lambda\mapsto\mathsf q_\lambda[u,u]$ is analytic for any
$u\in H^1(\mathcal M)$. Moreover, the sector of $\mathsf q_\lambda$
is independent of $\lambda\in\mathcal O_\alpha$.
\end{lemma}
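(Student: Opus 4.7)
The approach is to reduce everything to an explicit coordinate calculation on the PML tail $(r,\infty)\times\Omega$, where all the novelty lives. Using that $\mathsf g_\lambda[\cdot,\cdot]$ is conjugate-linear in the second slot, that $\overline{\varrho_{\bar\lambda}}=\varrho_\lambda$ by Schwarz reflection, and that $\varrho_\lambda\operatorname{dvol}_\lambda=\operatorname{dvol}_0$, the Leibniz rule applied to $d(\varrho_{\bar\lambda}u)$ yields
\[
\mathsf q_\lambda[u,u]=\int_{\mathcal M}\mathsf g_\lambda[du,du]\,\operatorname{dvol}_0+\int_{\mathcal M}\bar u\,\mathsf g_\lambda[du,d\varrho_{\bar\lambda}]\,\operatorname{dvol}_\lambda.
\]
The second integral is supported on $(r,\infty)\times\Omega$ (since $d\varrho_{\bar\lambda}\equiv 0$ elsewhere) with coefficients bounded uniformly in $\lambda\in\mathcal O_\alpha$; on $\mathcal M\setminus((r,\infty)\times\Omega)$ the form reduces to the standard Dirichlet form $(du,du)_0$, so all $\lambda$-dependence is confined to the PML tail.

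For continuity and analyticity, I would combine condition ii of Definition~\ref{ACE} with the elementary bound $|1+\lambda\mathsf s'_r(x)|\geq 1-1/\sqrt{2}$ to conclude that the coefficients of $\mathsf g_\lambda[\cdot,\cdot]$ on covectors and the ratio $\operatorname{dvol}_\lambda/\operatorname{dvol}_0$ are uniformly bounded in $x\in\mathbb R_+$ and in $\lambda$ on compact subsets of $\mathcal O_\alpha$. This yields a pointwise estimate $|\mathsf q_\lambda[u,u]|\leq C\|u\|_{H^1(\mathcal M)}^2$ valid for $u\in C_c^\infty(\mathcal M)$, extending $\mathsf q_\lambda$ by density to a continuous sesquilinear form on $H^1(\mathcal M)$. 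For fixed $u\in C_c^\infty(\mathcal M)$ the integrand depends analytically on $\lambda$ pointwise and is dominated in $L^1$, so Morera's theorem gives analyticity of $\lambda\mapsto\mathsf q_\lambda[u,u]$; Vitali's theorem together with the uniform bound then propagates analyticity to all $u\in H^1(\mathcal M)$.

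The sectoriality step is the main content. In coordinates on the PML tail the leading part of $\mathsf g_\lambda[du,du]\,\operatorname{dvol}_0$ is a sum of a scalar factor $(1+\lambda\mathsf s'_r(x))^{-2}$ multiplying $|\partial_x u|^2$ and a positive horizontal quadratic form in $du$ equal to $\mathfrak h^{-1}(\nabla_y u,\nabla_y u)+o(1)$ as $x\to\infty$ by condition ii. Since $|\lambda|<\sin\alpha$ with $\alpha<\pi/4$, one has $|\arg(1+\lambda\mathsf s'_r(x))^{-2}|\leq 2\alpha<\pi/2$, with modulus bounded above and away from zero in terms of $\sin\alpha$. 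Adding a nonnegative term to a complex number in a cone of half-angle $2\alpha$ keeps the sum in the same cone, so the principal integrand lies pointwise in $\{z:|\arg z|\leq 2\alpha\}$. Absorbing the remainder terms (the cross term from the first display, the $\mathfrak g_1$ off-diagonal contributions, and the differences $\mathfrak g_k(x+\lambda\mathsf s_r)-\mathfrak g_k(x)$) into $\varepsilon\|du\|_0^2+C_\varepsilon\|u\|_0^2$ via Cauchy-Schwarz with a small parameter $\varepsilon$, one obtains uniformly in $\lambda\in\mathcal O_\alpha$
\[
|\operatorname{Im}\mathsf q_\lambda[u,u]|\leq\tan(2\alpha)\bigl(\operatorname{Re}\mathsf q_\lambda[u,u]+\gamma(u,u)_0\bigr),\quad\operatorname{Re}\mathsf q_\lambda[u,u]+\gamma(u,u)_0\geq c\|u\|_{H^1(\mathcal M)}^2,
\]
for suitable $\gamma,c>0$. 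These are exactly the sectorial and closedness conditions, with sector $\{z:|\arg z|\leq 2\alpha\}$ independent of $\lambda$ and form norm equivalent to $\|\cdot\|_{H^1(\mathcal M)}$.

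The principal difficulty is precisely this last step: isolating the dominant sectorial piece from the many lower-order cross terms and proving that they are subordinate uniformly in both $\lambda$ and the axial variable $x$. Uniform control in $x$ is exactly where condition ii of Definition~\ref{ACE} enters, and ultimately $r$ must be chosen large enough that the corrections are dominated by the leading piece with constants independent of $\lambda$.
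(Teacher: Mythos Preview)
Your approach is essentially the paper's: the same splitting of $\mathsf q_\lambda[u,u]$ into the principal term $\int_{\mathcal M}\mathsf g_\lambda[du,du]\,\operatorname{dvol}_0$ plus a cross term supported on the tail, the same pointwise sectoriality of the model inverse metric $\diag\{(1+\lambda\mathsf s_r')^{-2},\mathfrak h^{-1}\}$ in the cone $|\arg z|\le 2\alpha$, and the same uniform-in-$\lambda$ coercivity conclusion. Two small imprecisions are worth fixing. First, the relevant remainder in the principal term is the difference between $\mathsf g_\lambda^{\ell m}$ and $\diag\{(1+\lambda\mathsf s_r')^{-2},\mathfrak h^{-1}\}$ (i.e.\ the distance of $\mathfrak g_k(x+\lambda\mathsf s_r)$ to its limit $1,0,\mathfrak h$), not $\mathfrak g_k(x+\lambda\mathsf s_r)-\mathfrak g_k(x)$. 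Second, those metric remainders enter the principal term \emph{quadratically} in $du$, so they cannot be absorbed into $\varepsilon\|du\|_0^2+C_\varepsilon\|u\|_0^2$ by the Cauchy--Schwarz $\varepsilon$-trick (which requires one factor linear in $du$); their smallness must come directly from condition~ii and the choice of large $r$, as you note in your last paragraph, and this widens the sector half-angle slightly beyond $2\alpha$ rather than leaving it exactly $2\alpha$. Only the cross term involving $d\varrho_{\bar\lambda}$ is genuinely linear in $du$ and admits the $\varepsilon$-absorption you describe.
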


The  family $\mathcal O_\alpha\ni\lambda\mapsto \mathsf q_\lambda$
uniquely determines an analytic family $\mathcal
O_\alpha\ni\lambda\mapsto {^\lambda\!\Delta}$ of unbounded
m-sectorial operators in $L^2(\mathcal M)$~\cite{Kato,Simon Reed
iv}. (Here and elsewhere m-sectorial means that the numerical range
$\{(Au,u): u\in \mathscr D(A)\}$ and the spectrum $\sigma(A)$ of a
closed unbounded operator $A$ with the domain $\mathscr D(A)$ are
both in some sector $\{\mu\in\Bbb C:|\arg(\mu+c)|\leq
\vartheta<\pi/2, c>0\}$.)  We have
\begin{equation}\label{LF}
({^\lambda\!\Delta}u,v)_0=\mathsf q_\lambda[u,v],\quad u\in\mathscr
D({^\lambda\!\Delta}), v\in H^1(\mathcal M).
\end{equation}
 In particular,
$({^0\!\Delta}u,v)_0=( du,dv)_0$ and ${^0\!\Delta}$ is the
selfadjoint Neumann Laplacian. We consider ${^\lambda\!\Delta}$ with
non-real $\lambda\in\mathcal O_\alpha$ as the operator modeling an
infinite PML on $({r},\infty)\times\Omega$. We will show that this
PML is an artificial nonreflective strongly absorbing layer for the
outgoing (resp. incoming) solutions if $\Im\lambda>0$ (resp.
$\Im\lambda<0$). Note that for $u\in\mathscr D({^0\!\Delta})$
supported outside $({r},\infty)\times\Omega$ we have $u\in \mathscr
D({^\lambda\!\Delta})$ and ${^0\!\Delta}u\equiv
{^\lambda\!\Delta}u$.

\begin{proof} In a finite covering of $\Omega$
by coordinate neighborhoods $\{\mathscr U_j\}$ take a neighborhood
$\mathscr U_j$ and coordinates $y\in\Bbb R^n$ in $\mathscr U_j$.
Then $\{\Bbb R_+\times\mathscr U_j\}$ is a covering of $\Pi$ and
$(x,y)$ are coordinates in $\Bbb R_+\times \mathscr U_j$. Denote
$\partial_0=\partial/\partial x$, $\partial_\ell=\partial/\partial
y_\ell$, $d_0=dx$, and $d_\ell=dy_\ell$. As $\mathsf g_\lambda$ is
non-degenerate, the matrix $\mathsf g_{\lambda,\ell m}$ in the
coordinate representation $\mathsf g_\lambda=\mathsf g_{\lambda,\ell
m}d_\ell\otimes d_m$ has the inverse $\mathsf g_{\lambda}^{\ell m}$
and for $\xi=\xi_\ell d_\ell$ we have ${^\lambda\sharp}\xi= \mathsf
g_{\lambda}^{\ell m} \xi_\ell\partial_m$. Hence $\mathsf
g_\lambda[\xi,\xi]=\mathsf g_{\lambda}^{\ell m} \xi_\ell \bar\xi_m$.

 On $\Omega$ all metrics are equivalent. In particular we can take $\mathfrak e=\delta_{\ell
m}d_\ell\otimes d_m$, where $\delta_{\ell m}$ is the Kronecker
delta. Then Definition~\ref{ACE} together with~\eqref{TFL}
immediately implies that
\begin{equation}\label{STAB-}
\bigr|\partial_k^j\bigr(\mathsf g_{\lambda}^{\ell
m}-\diag\{(1+\lambda)^{-2},\mathfrak h^{-1}\}_{\ell
m}\bigr)(x,y)\bigr|\leq C(x)\to 0\text{ as } x \to\infty,\ j=0,1.
\end{equation}
Moreover,
\begin{equation}\label{STAB}
\bigr|\partial_k^j\bigr(\mathsf g_{\lambda}^{\ell
m}-\diag\{(1+\lambda s'_{r})^{-2},\mathfrak h^{-1}\}_{\ell
m}\bigr)(x,y)\bigr|\leq \epsilon_{r},\quad x\geq {r},\
\lambda\in\mathcal O_\alpha,\ j=0,1,
\end{equation}
with sufficiently small $\epsilon_{r}$ as ${r}$ is sufficiently
large. Since $\mathfrak h^{-1}(y)$ is a symmetric positive definite
matrix, we get
$$
\bigl|\arg\bigl((1+\lambda s'_{r}(x))^{-2}|\xi_0|^2+\mathfrak
h^{\ell m}(y)\xi_\ell\bar\xi_m\bigr)\bigr|<2\alpha<\pi/2,
$$
$$
c|\xi|^2\leq \bigl|(1+\lambda s'_{r}(x))^{-2}|\xi_0|^2+\mathfrak
h^{\ell m}(y)\xi_\ell\bar\xi_m\bigr|\leq |\xi|^2/c,
$$
where $|\xi|^2=\xi_\ell\bar\xi_\ell$ and $c>0$. This together
with~\eqref{STAB} gives
\begin{equation}\label{++}
\delta|\xi|^2\leq \Re\bigl(\mathsf g_\lambda^{\ell
m}(x,y)\xi_\ell\bar\xi_m\bigr)\leq \delta|\xi|^2,\quad |\arg
\bigl(\mathsf g_\lambda^{\ell m}(x,y)\xi_\ell\bar\xi_m\bigr)|\leq
\vartheta,
\end{equation}
where $x\geq {r}$,
$\delta=\min\{c-\epsilon_{r},(1/c+\epsilon_{r})^{-1}\}$, and
$\vartheta=2\alpha+2\arcsin(\epsilon_{r}/2c)<\pi/2$.

 From~\eqref{++}
and $\mathsf g^p_\lambda=\mathsf g^p$, $p\in\mathcal
M\setminus({r},\infty)\times\Omega$, we conclude that the first term
in the representation
\begin{equation}\label{+++}
\mathsf q_\lambda[u,u]=\int_{\mathcal M} \mathsf
g_\lambda[du,du]\operatorname{dvol}_0+\int_{\mathcal M} \frac
{\mathsf g_\lambda[du,ud\varrho_{\bar \lambda}]}
{\varrho_\lambda}\operatorname{dvol}_0,\quad u\in
C_c^\infty(\mathcal M),
\end{equation}
meets the estimates
\begin{equation}\label{+++1}
-\theta\leq\arg \int_{\mathcal M} \mathsf
g_\lambda[du,du]\operatorname{dvol}_0\leq \theta,\quad
c(du,du)_0\leq \Re \int_{\mathcal M} \mathsf
g_\lambda[du,du]\operatorname{dvol}_0\leq (du,du)_0/c
\end{equation}
with some $\theta<\pi/2$ and $c>0$. The second term has an arbitrary
small relative bound with respect to the first term. Indeed,
\begin{equation}\label{+++2}
\begin{aligned}
&\left|\int_{\mathcal M} \frac {\mathsf g_\lambda[du,ud\varrho_{\bar
\lambda}]} {\varrho_\lambda}\operatorname{dvol}_0\right|\leq\frac
{(1+\tan\theta)} {\inf_{p\in\mathcal
M}\varrho_\lambda(p)}\left(\Re\int_{\mathcal M}\mathsf
g_\lambda[du,du]\operatorname{dvol}_0\right)^{1/2}\\
&\quad \times\left(\Re\int_{\mathcal M} |u|^2 \mathsf g_\lambda[d
\varrho_{ \lambda},d\varrho_{
\lambda}]\operatorname{dvol}_0\right)^{1/2} \leq \epsilon
\left|\int_{\mathcal M}\mathsf g_\lambda[du,du]\operatorname{dvol}_0
\right|+\epsilon^{-1}C\|u\|^2
\end{aligned}
\end{equation}
for any $\epsilon>0$ and an independent of $\epsilon$, $\lambda$,
and $u$ constant $C$. Here we used the uniform in $\lambda$ and $p$
estimates $|\varrho_\lambda(p)|\geq c>0$ and $\mathsf g^p_\lambda[d
\varrho_{ \lambda},d\varrho_{ \lambda}]\leq C<\infty$, which are
valid because $\varrho_\lambda(p)=1$ for $p\notin
({r},\infty)\times\Omega$ and $\varrho_\lambda=\sqrt{|\mathsf
g_0|/|\mathsf g_\lambda|}$ in $\Bbb R_+\times\mathscr U_j$, where
$\partial_k^j|\mathsf
g_\lambda|(x,y)\to\partial_k^j\bigl((1+\lambda\mathsf s'_{r})^2
|\mathfrak h|\bigr)(x,y)$ as $x\geq r\to\infty$, cf.~\eqref{STAB}
(as usual, $|\mathsf g|:=\det(\mathsf g_{\ell m})$).

As a consequence of~\eqref{+++},~\eqref{+++1}, and~\eqref{+++2} for
some $\vartheta<\pi/2$ and some positive $\delta$ and $\gamma$ we
obtain
\begin{equation}\label{+++3}
|\arg(\mathsf q_\lambda[u,u]+\gamma\|u\|^2)|\leq \vartheta,\quad
\delta(du,du)_0-\gamma\|u\|^2\leq \Re\mathsf q_\lambda[u,u]\leq
\bigl((du,du)_0+\|u\|^2\bigr)/\delta
\end{equation}
uniformly in $\lambda\in\mathcal O_\alpha$. Therefore $\mathsf
q_\lambda$ in $L^2(\mathcal M)$ with domain $H^1(\mathcal M)$ is a
closed densely defined sectorial form and its sector $\{\mu\in\Bbb
C: |\arg(\mu+\gamma)|\leq\vartheta\}$ is independent of $\lambda$.
By construction the function $\mathcal
O_\alpha\ni\lambda\mapsto\mathsf q_\lambda[u,u]$ is analytic for any
$u\in H^1(\mathcal M)$.
\end{proof}

\section{Conjugated operator and its essential spectrum}\label{s5}

In order to show exponential decay of outgoing (resp. incoming)
solutions in infinite PMLs we study the operator
${^\lambda\!\Delta}$, $\Im\lambda>0$ (resp. $\Im\lambda<0$),
conjugated with an exponent.

Let  $\mathsf s$ be a smooth function on the semi-cylinder $\Pi$,
which depends only on the axial variable $x\in\Bbb R_+$ and
possesses  the properties~\eqref{ab}. We extend $\mathsf s$ to a
smooth function on $\mathcal M$ by setting $\mathsf
s\!\upharpoonright_{\mathcal M\setminus\Pi}\equiv 0$. Consider the
conjugated operator ${^\lambda\!\Delta}_\beta=e^{-\beta\mathsf
s}\,{^\lambda\!\Delta}\,e^{\beta\mathsf  s}$ with parameter
$\beta\in\Bbb C$ on functions  $\{u\in C_c^\infty(\mathcal M):
e^{\beta\mathsf  s} u\in \mathscr D({^\lambda\!\Delta})\}$, where
$e^{\beta\mathsf s}$ is the operator of multiplication by the
exponent. With ${^\lambda\!\Delta}_\beta$ we associate the quadratic
form
$$
\mathsf q_\lambda^\beta[u,u]=\bigl( d(e^{\beta \mathsf s}
u),d(\overline{e^{-\beta \mathsf s}\varrho_\lambda}u)\bigr)_\lambda,
\quad u\in C_c^\infty(\mathcal M).
$$

\begin{lemma}\label{q beta}
For any $\lambda\in\mathcal O_\alpha$ the form $\mathsf
q_\lambda^\beta$  in $L^2(\mathcal M)$  is a closed sectorial form
with the domain $H^1(\mathcal M)$. Moreover, its sector is
independent of $\lambda$.
\end{lemma}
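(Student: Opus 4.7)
The plan is to expand $\mathsf q_\lambda^\beta[u,u]$ by the Leibniz rule, identify $\mathsf q_\lambda[u,u]$ as the principal part, and bound the remainder by an arbitrarily small relatively form-bounded perturbation of $\mathsf q_\lambda$, uniformly in $\lambda\in\mathcal O_\alpha$. Given Lemma~\ref{L1}, the standard stability of closed sectorial forms under small relatively bounded perturbations (Kato, Ch.~VI) then yields closedness, sectoriality, and a $\lambda$-independent sector on the unchanged form domain $H^1(\mathcal M)$.

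First I would compute
\begin{align*}
d(e^{\beta\mathsf s}u) &= e^{\beta\mathsf s}(du+\beta u\,d\mathsf s),\\
d(\overline{e^{-\beta\mathsf s}\varrho_\lambda}\,u) &= e^{-\bar\beta\mathsf s}\bigl(d(\varrho_{\bar\lambda}u)-\bar\beta\varrho_{\bar\lambda}u\,d\mathsf s\bigr),
\end{align*}
using that $\mathsf s$ is real and that $\overline{\varrho_\lambda}=\varrho_{\bar\lambda}$ (the latter inherited from the Schwarz symmetry $\overline{\mathsf g_\lambda}=\mathsf g_{\bar\lambda}$, hence $\overline{\operatorname{dvol}_\lambda}=\operatorname{dvol}_{\bar\lambda}$). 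When these are substituted into $\mathsf q_\lambda^\beta[u,u]$ and the exponentials are factored out of the sesquilinear $\mathsf g_\lambda[\cdot,\cdot]$, a scalar pulled out of the second slot comes out conjugated, so $e^{\beta\mathsf s}\cdot\overline{e^{-\bar\beta\mathsf s}}=e^{\beta\mathsf s}e^{-\beta\mathsf s}=1$ cancels identically. This is precisely the point of writing $\overline{e^{-\beta\mathsf s}\varrho_\lambda}$ rather than $e^{-\beta\mathsf s}\varrho_{\bar\lambda}$ in the definition: it makes the cancellation work for arbitrary complex $\beta$. What remains is
\begin{equation*}
\mathsf q_\lambda^\beta[u,u]=\mathsf q_\lambda[u,u]+R_\lambda^\beta[u,u],
\end{equation*}
where $R_\lambda^\beta[u,u]$ is a sum of three integrals with integrands proportional to $\bar u\,\mathsf g_\lambda[du,d\mathsf s]$, $u\,\mathsf g_\lambda[d\mathsf s,d(\varrho_{\bar\lambda}u)]$, and $|u|^2\mathsf g_\lambda[d\mathsf s,d\mathsf s]$ (up to bounded factors of $\beta$, $\bar\beta$, and $\varrho_\lambda$).

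Next I would bound $R_\lambda^\beta$. The form $d\mathsf s$ vanishes on $\mathcal M\setminus\Pi$ and satisfies $|\mathsf s'|\le 1$ on $\Pi$. From the uniform estimates~\eqref{STAB-}--\eqref{++} and the bound $|\varrho_\lambda(p)|\in[c,C]$ established inside the proof of Lemma~\ref{L1}, the pointwise quantities $|\mathsf g_\lambda^p[d\mathsf s,d\mathsf s]|$ and $|\mathsf g_\lambda^p[du,d\mathsf s]|^2\le \mathsf g_\lambda^p[du,du]\cdot\mathsf g_\lambda^p[d\mathsf s,d\mathsf s]$ are controlled uniformly in $\lambda\in\mathcal O_\alpha$. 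Pointwise Cauchy--Schwarz on each cross term followed by $2ab\le\epsilon a^2+\epsilon^{-1}b^2$ in $L^2$ yields, for any $\epsilon>0$,
\begin{equation*}
|R_\lambda^\beta[u,u]|\le \epsilon\,(du,du)_0 + C_\epsilon(\beta)\,\|u\|^2,
\end{equation*}
with $C_\epsilon(\beta)$ independent of $\lambda$ and $u$. Combined with the uniform lower estimate $\delta(du,du)_0-\gamma\|u\|^2\le\Re\mathsf q_\lambda[u,u]$ from~\eqref{+++3}, this shows $R_\lambda^\beta$ is $\mathsf q_\lambda$-bounded with arbitrarily small relative bound, uniformly in $\lambda$. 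The perturbation theorem for sectorial forms then delivers all the conclusions, and uniformity of the constants in $\lambda\in\mathcal O_\alpha$ gives the $\lambda$-independent sector.

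The main obstacle I anticipate is purely algebraic bookkeeping: keeping the sesquilinear signs and the several conjugated scalar factors straight through the expansion so that the cancellation of $e^{\beta\mathsf s}$ is manifest and $R_\lambda^\beta$ is genuinely of lower order. Once that is in hand, the perturbation argument is routine.
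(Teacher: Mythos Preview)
Your approach is essentially the paper's: expand $\mathsf q_\lambda^\beta-\mathsf q_\lambda$ via the Leibniz rule, observe that the exponentials cancel, and show the remainder is $\mathsf q_\lambda$-bounded with arbitrarily small relative bound uniformly in $\lambda$, then invoke~\eqref{+++3} and Kato's perturbation theory for sectorial forms. One small caveat: the pointwise Cauchy--Schwarz inequality $|\mathsf g_\lambda^p[du,d\mathsf s]|^2\le \mathsf g_\lambda^p[du,du]\cdot\mathsf g_\lambda^p[d\mathsf s,d\mathsf s]$ you invoke is not literally valid, since for non-real $\lambda$ the form $\mathsf g_\lambda^p[\cdot,\cdot]$ is not Hermitian (and the right-hand side is not even real); what you actually need, and have, is the coefficient bound $|\mathsf g_\lambda^p[\xi,\eta]|\le C|\xi|\,|\eta|$ together with $\Re\mathsf g_\lambda^p[\xi,\xi]\ge\delta|\xi|^2$ from~\eqref{++}, which yields the same conclusion (this is exactly how the paper handles the analogous term in~\eqref{+++2}).
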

\begin{proof} We have
$$
\mathsf q_\lambda^\beta[u,u]-\mathsf q_\lambda[u,u]=\beta\bigl(u\,
d\mathsf s,d(\varrho_{\overline{\lambda}}u)\bigr)_\lambda
-\beta\bigl(\varrho_\lambda\, du,u\,d\mathsf s\bigr)_\lambda
-\beta^2\bigl(\varrho_\lambda u\,d\mathsf s,u\,d\mathsf
s\bigr)_\lambda,
$$
where the right hand side depends linearly on $d u$. Similarly
to~\eqref{+++2} we conclude that the difference $\mathsf
q^\beta_\lambda- \mathsf q_\lambda$ has an arbitrarily small
relative bound  with respect to $\mathsf q_\lambda$. More precisely,
$$
|\mathsf q_\lambda^\beta[u,u]-\mathsf q_\lambda[u,u]|\leq
\varepsilon |\mathsf q_\lambda[u,u]|+ C(|\beta|,\varepsilon)\|u\|^2,
$$
 where $\varepsilon>0$ is arbitrarily small  and
$C(|\beta|,\varepsilon)$ is independent of $\lambda\in\mathcal
O_\alpha$ and $u\in C_c^\infty(\mathcal M)$. This together
with~\eqref{+++3} completes the proof.
 \end{proof}

 The Friedrichs
extension of ${^\lambda\!\Delta}_\beta$ is an m-sectorial
operator~\cite{Kato,Simon Reed iv}. Consider its domain $\mathscr
D({^\lambda\!\Delta}_\beta)$ as a Hilbert space with the norm
$\sqrt{\|\cdot\|^2+\|{^\lambda\!\Delta}_\beta\cdot\|^2}$. We say
that $\mu$ is a point of the essential spectrum
$\sigma_{ess}({^\lambda\!\Delta}_\beta)$ if the bounded operator
\begin{equation}\label{5}
{^\lambda\!\Delta}_\beta-\mu :\mathscr
D({^\lambda\!\Delta}_\beta)\to L^2(\mathcal M)
\end{equation}
is not  Fredholm. (Recall that  a bounded linear operator is said to
be  Fredholm, if  its kernel  and cokernel
 are finite-dimensional, and the
range  is closed.)
\begin{proposition}\label{ess}
Let $\lambda\in \mathcal O_\alpha$ and $\beta\in\Bbb C$.  Then
$\mu\in \sigma_{ess}({^\lambda\!\Delta}_\beta)$ if and only if
\begin{equation}\label{eq9}
\mu-(1+\lambda)^{-2}(\xi+i\beta)^2\in\sigma(\Delta_\Omega^N)
\end{equation}
for  some $\xi\in\Bbb R$, where $\sigma(\Delta_\Omega^N)$ is the
spectrum of the selfadjoint Neumann Laplacian $\Delta_\Omega^N$ on
$( \Omega,\mathfrak h)$.
\end{proposition}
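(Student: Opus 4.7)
The plan is to reduce the computation of $\sigma_{ess}({^\lambda\!\Delta}_\beta)$ to that of the spectrum of a translation-invariant model operator encoding the asymptotic geometry of the quasicylindrical end. By Definition~\ref{ACE} and~\eqref{TFL}, for $x$ large enough that $\mathsf s'_{r}(x)=1$, the coefficients of $\mathsf g_\lambda$ approach those of the product metric $(1+\lambda)^2 dx\otimes dx+\mathfrak h$ (the estimate~\eqref{STAB-} makes this precise), $\varrho_\lambda\to 1$, and $\mathsf s(x)=x+\mathrm{const}$. The Laplace--Beltrami operator for the product metric, conjugated by $e^{\beta x}$, is therefore the natural model
\[
\mathcal L_\lambda^\beta=-(1+\lambda)^{-2}(\partial_x+\beta)^2+\Delta_\Omega^N
\]
on $\mathbb R\times\Omega$ with the straight Neumann condition on $\mathbb R\times\partial\Omega$. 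Partial Fourier transform in $x$ diagonalises $\mathcal L_\lambda^\beta$ as the direct integral of $-(1+\lambda)^{-2}(i\xi+\beta)^2+\Delta_\Omega^N$ in $L^2(\Omega,\mathfrak h)$; since $\Delta_\Omega^N$ is selfadjoint with discrete spectrum and $-(i\xi+\beta)^2$ ranges, as $\xi\in\mathbb R$, over the same set as $(\xi+i\beta)^2$, the spectrum of $\mathcal L_\lambda^\beta$ is exactly the set appearing on the right of~\eqref{eq9}.

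I would then establish $\sigma_{ess}({^\lambda\!\Delta}_\beta)=\sigma(\mathcal L_\lambda^\beta)$ by two complementary Fredholm arguments. For $\mu=(1+\lambda)^{-2}(\xi_0+i\beta)^2+\nu_0$ with $\nu_0$ an eigenvalue of $\Delta_\Omega^N$ and eigenfunction $\phi$, I would construct a singular sequence
\[
u_k(x,y)=R_k^{-1/2}\chi\!\left(\tfrac{x-T_k}{R_k}\right)e^{-i\xi_0 x}\phi(y),\qquad T_k,R_k\to\infty,
\]
localised deeper and deeper in the end. A direct computation gives $\|(\mathcal L_\lambda^\beta-\mu)u_k\|\to 0$, and the coefficient decay on the tail from~\eqref{STAB-} then implies $\|({^\lambda\!\Delta}_\beta-\mu)u_k\|\to 0$, while $\|u_k\|$ stays bounded below and $u_k\rightharpoonup 0$, precluding Fredholmness of~\eqref{5}. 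Conversely, for $\mu\notin\sigma(\mathcal L_\lambda^\beta)$ I would build a parametrix from a partition of unity $\chi_0+\chi_\infty=1$ with $\chi_\infty$ supported far out on the end: combine $\chi_\infty(\mathcal L_\lambda^\beta-\mu)^{-1}$ on the tail with a bounded inverse of ${^\lambda\!\Delta}_\beta+K-\mu$ on the core (which exists for large $K>0$ by the sectoriality~\eqref{+++3} from Lemma~\ref{q beta}) cut off by $\chi_0$. The commutators $[{^\lambda\!\Delta}_\beta,\chi_j]$ are first-order with compact support, and the discrepancy ${^\lambda\!\Delta}_\beta-\mathcal L_\lambda^\beta$ on the tail has coefficients tending to zero by~\eqref{STAB-}; after composition with the resolvents these errors are compact, so~\eqref{5} is Fredholm.

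The hardest point is that both constructions must respect the boundary condition built into $\mathscr D({^\lambda\!\Delta}_\beta)$. Because $\mathsf g_\lambda$ varies with $x$, the co-normal condition on $\partial\Omega\times(r,\infty)$ changes along the end and differs from the straight Neumann condition defining $\mathscr D(\mathcal L_\lambda^\beta)$ — the ``new effect'' flagged in the introduction. To put the singular sequence into $\mathscr D({^\lambda\!\Delta}_\beta)$, and to interpret the parametrix between the correct graph spaces, one must add a boundary corrector whose $L^2$-norm is controlled uniformly as its support slides to infinity; this forces one to work with non-homogeneous boundary value problems rather than purely with form-domain test functions. Once this correction is in place, the sectoriality of Lemma~\ref{q beta} together with elliptic estimates upgrades the formal parametrix computation into genuine Fredholmness of~\eqref{5} and simultaneously shows that the singular sequence cannot be extracted by a Fredholm inverse, yielding both inclusions.
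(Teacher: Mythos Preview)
Your strategy is essentially the paper's: identify the translation-invariant model $\mathcal L_\lambda^\beta$ via~\eqref{STAB-}, compute its spectrum by Fourier transform, then prove $\sigma_{ess}({^\lambda\!\Delta}_\beta)=\sigma(\mathcal L_\lambda^\beta)$ by a Weyl sequence in one direction and a Fredholm argument in the other. Your singular sequence is the paper's~\eqref{test} up to the choice of phase, and the paper handles the boundary-condition mismatch exactly as you anticipate---by passing to the non-homogeneous problem and subtracting the corrector $({^\lambda\!\Delta_\beta}-\mu_0,{^\lambda\!\mathcal N_\beta})^{-1}(0,{^\lambda\!\mathcal N_\beta}u_\ell)$, whose $H^2$-norm is shown to be $o(\|u_\ell\|_{H^2})$ because ${^\lambda\!\mathcal N_\beta}-\partial_\eta$ has vanishing coefficients at infinity.

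The only substantive difference is in the Fredholm direction: you build a two-sided parametrix, whereas the paper derives a global coercive estimate~\eqref{coercive!} and invokes Peetre's lemma (then repeats for the adjoint to control the cokernel). These routes are interchangeable; your parametrix has the minor advantage of giving both kernel and cokernel at once. One technical slip to fix: the tail discrepancy $({^\lambda\!\Delta}_\beta-\mathcal L_\lambda^\beta)\chi_\infty$ composed with the model resolvent is \emph{not} compact---its coefficients decay but are not compactly supported---it is merely of small operator norm once $\chi_\infty$ is pushed far enough out. That is still sufficient (small-norm error is absorbed by Neumann series, commutator error is compact), but you should write ``small plus compact'' rather than ``compact''. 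The paper sidesteps this by using smallness directly to obtain the estimate~\eqref{einf}.
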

The spectrum $\sigma_{ess}({^\lambda\!\Delta}_\beta)$ is depicted on
Fig.~\ref{fig5}. In the case $\beta=0$ the parabolas collapse to the
dashed rays  and we obtain the essential spectrum of
${^\lambda\!\Delta}\equiv{^\lambda\!\Delta}_0$.
\begin{figure}
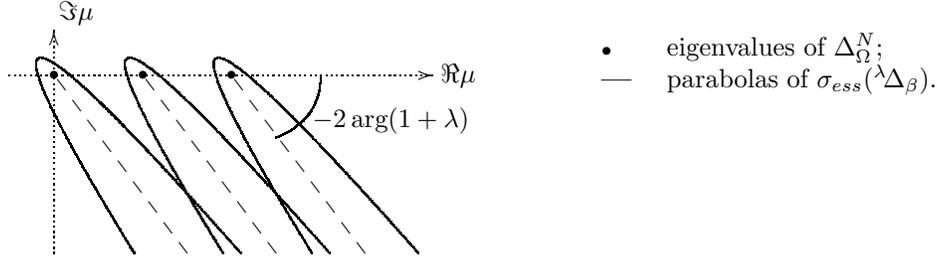
\centering
\[
\xy (0,0)*{\xy0;/r.28pc/:{\ar@{.>}(0,10);(50,10)*{\ \Re \mu}};
{\ar@{.>}(5,-10);(5,15)}; (7,17)*{\ \Im \mu};
{\ar@{--}(5,10)*{\scriptstyle\bullet};(20,-10)};(14,-10);(26,-10)**\crv{(-13,34)};
{\ar@{--}(15,10)*{\scriptstyle\bullet};(30,-10)};(24,-10);(36,-10)**\crv{(-3,34)};
{(25,10)*{\scriptstyle\bullet};(40,-10)*{} **\dir{--}};
(35,10);(30,3)**\crv{(35,5)*{\quad\quad\quad\quad\quad\
-2\arg(1+\lambda)}};(34,-10);(46,-10)**\crv{(7,34)};
\endxy};
(70,0)*{\xy (20,20)*{
\begin{array}{ll}
     {\scriptstyle\bullet } & \text{ eigenvalues of $\Delta^N_\Omega$;} \\
    \text{\bf---} & \text{ parabolas of $\sigma_{ess}({^\lambda\!\Delta}_\beta)$.}
\end{array}};
\endxy};
\endxy
\]
\caption{Essential spectrum of the operator
${^\lambda\!\Delta}_\beta$ for $\Im\lambda>0$ and $\beta\gtrless 0$,
the dashed rays correspond to $\beta = 0$.}\label{fig5}
\end{figure}
The proof of Proposition~\ref{ess} is preceded by
\begin{lemma}\label{lem} The operator ${^\lambda\!\Delta}$, $\lambda\in\mathcal O_\alpha$,
corresponds to a regular  elliptic boundary value problem on
$\mathcal M$.
\end{lemma}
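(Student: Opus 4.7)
The plan is to extract an explicit local-coordinate description of ${^\lambda\!\Delta}$ and of the natural boundary condition from the form $\mathsf q_\lambda$, and then to verify the three Agmon--Douglis--Nirenberg conditions (ellipticity, proper ellipticity, Shapiro--Lopatinski) for the resulting boundary value problem.

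First I would rewrite the form in local coordinates. Using $\varrho_\lambda\operatorname{dvol}_\lambda=\operatorname{dvol}_0$ together with $\varrho_\lambda=\sqrt{|\mathsf g_0|/|\mathsf g_\lambda|}$, recorded inside the proof of Lemma~\ref{L1}, one obtains $\mathsf q_\lambda[u,v]=\int\sqrt{|\mathsf g_\lambda|}\,\mathsf g_\lambda^{\ell m}\partial_\ell u\,\partial_m\bar v\,dy$ in each coordinate patch. Integration by parts against arbitrary $v\in H^1(\mathcal M)$ then identifies ${^\lambda\!\Delta}u$ with the formal Laplace--Beltrami expression
\[
u\mapsto -\frac{1}{\sqrt{|\mathsf g_\lambda|}}\,\partial_m\bigl(\sqrt{|\mathsf g_\lambda|}\,\mathsf g_\lambda^{\ell m}\partial_\ell u\bigr),
\]
and the boundary contribution yields the generalized Neumann condition $\mathsf g_\lambda^{\ell m}\nu_\ell\partial_m u\big|_{\partial\mathcal M}=0$, where $\nu$ is a conormal to $\partial\mathcal M$. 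Inside the PML region these coefficients genuinely depend on $\lambda$, reflecting the ``new effect'' noted in the introduction.

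Next I would verify ellipticity. The principal symbol is $\mathsf g_\lambda^{\ell m}\xi_\ell\xi_m$, which on $\mathcal M\setminus(r,\infty)\times\Omega$ is the standard symbol of the Riemannian Laplacian and hence nondegenerate. On $(r,\infty)\times\Omega$, specializing the sectorial estimate~\eqref{++} to real $\xi$ (so that $\xi_m=\bar\xi_m$) gives $\Re\bigl(\mathsf g_\lambda^{\ell m}\xi_\ell\xi_m\bigr)\geq\delta|\xi|^2>0$, so the symbol never vanishes for real $\xi\neq 0$. For proper ellipticity I would fix a boundary point, pick local coordinates with $\mathcal M=\{y_n>0\}$, and study the quadratic
\[
L(\xi',\tau)=\mathsf g_\lambda^{nn}\tau^2+2\mathsf g_\lambda^{nk}\xi_k\tau+\mathsf g_\lambda^{kl}\xi_k\xi_l
\]
in $\tau$ for real tangential covector $\xi'\neq 0$. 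At $\lambda=0$, positive-definiteness of the real metric places the two roots in complex conjugate position, one in each half plane. For $\lambda\in\mathcal O_\alpha$, I would use the sectorial bound~\eqref{++} once more to show that the discriminant $(\mathsf g_\lambda^{nk}\xi_k)^2-\mathsf g_\lambda^{nn}\mathsf g_\lambda^{kl}\xi_k\xi_l$ stays away from $[0,\infty)$, so that analytic continuation in $\lambda$ from the real case preserves the existence of exactly one root $\tau_+$ with $\Im\tau_+>0$.

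Finally, for the Shapiro--Lopatinski condition the boundary symbol $B(\xi',\tau)=\mathsf g_\lambda^{nn}\tau+\mathsf g_\lambda^{nk}\xi_k$ must not vanish at $\tau_+$. A direct substitution yields
\[
B(\xi',\tau_+)=\sqrt{(\mathsf g_\lambda^{nk}\xi_k)^2-\mathsf g_\lambda^{nn}\mathsf g_\lambda^{kl}\xi_k\xi_l},
\]
which is nonzero by the previous discriminant analysis, so the conormal derivative covers ${^\lambda\!\Delta}$ and the problem is regular elliptic. The main obstacle will be the proper ellipticity step: because $\mathsf g_\lambda$ is complex symmetric rather than Hermitian, the classical real positive-definite argument does not apply directly, and the root-count has to be controlled via the sectorial bound~\eqref{++} of Lemma~\ref{L1} combined with analytic continuation from $\lambda=0$.
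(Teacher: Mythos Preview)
Your approach is correct but differs from the paper's in the verification of the Shapiro--Lopatinski\u{\i} condition on $(r,\infty)\times\partial\Omega$. The paper picks boundary normal coordinates for the \emph{limit} metric $\mathfrak h$ on $\Omega$, so that by~\eqref{STAB} the principal parts of $({^\lambda\!\Delta},{^\lambda\!\mathcal N})$ are uniformly close to the explicit model pair $\bigl(-(1+\lambda\mathsf s_r')^{-2}\partial_0^2-Q(y',\partial_{y'})-\partial_n^2,\ \partial_n\bigr)$; SL is checked for the model by solving the ODE directly and reading off $\partial_{y_n}\mathsf u(0)=-C\sqrt{(1+\lambda\mathsf s_r')^{-2}\xi_0^2+Q(y',\xi')}\neq 0$, and then transferred to the actual pair by perturbing the coercive estimate~\eqref{est}. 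This buys uniformity of the ellipticity constants in $r$, $x$, $y'$, $\xi$, and $\lambda\in\overline{\mathcal O_\alpha}$, which feeds directly into the later Fredholm estimate~\eqref{einf}. Your route---ellipticity from~\eqref{++}, homotopy in $\lambda$ from $\lambda=0$ to count roots, then $B(\xi',\tau_+)=\pm\sqrt{\text{discriminant}}$---is more abstract and model-independent. One small correction: you do not need the discriminant to avoid all of $[0,\infty)$, and~\eqref{++} alone will not give that; you only need the discriminant to be nonzero, which is automatic once proper ellipticity (one root in each half-plane) is established, since a double root would place both roots in the same half-plane. The real driver of your homotopy argument is simply that ellipticity forbids real roots, so the root count in $\{\Im\tau>0\}$ is locally constant in $\lambda\in\mathcal O_\alpha$ and equals $1$ at $\lambda=0$.
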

\begin{proof} On $\mathcal M\setminus({r},\infty)\times\Omega$ the operator ${^\lambda\!\Delta}$
coincides with the  Neumann  Laplacian, which corresponds to a
regular elliptic problem~\cite{Lions Magenes,KozlovMazyaRossmann}.

 Let $y\in\Bbb R^n$ be a  system of
coordinates in a neighborhood $\mathscr U_j$ on $\Omega$. In the
case $\partial\Omega\cap\mathscr U_j\neq\varnothing$ we pick
boundary normal coordinates $y=(y',y_n)$ such that $\partial_{y_n}$
coincides with the unit inward normal derivative given by the metric
$\mathfrak h$.  From~\eqref{LF} it follows that  in the coordinates
$(x,y)$ we have
\begin{equation}\label{oper}
{^\lambda\!\Delta}=-|\mathsf g_\lambda|^{-1/2}\partial_\ell|\mathsf
g_\lambda|^{1/2}\mathsf g_\lambda^{\ell m}\partial_m
\end{equation}
and  the functions $u\in \mathscr D({^\lambda\!\Delta})$ satisfy
some boundary condition $^\lambda\!\mathcal N u=0$ on
$\partial\mathcal M$ such that
\begin{equation}\label{bc}
^\lambda\!\mathcal N u=\mathsf g^{n m}_\lambda\partial_m
u\upharpoonright_{y_n=0}=0
\end{equation} if $\partial\Omega\cap\mathscr U_j\neq\varnothing$; here
notations are the same as in the proof of Lemma~\ref{L1}. The
operator~\eqref{oper} is strongly elliptic for $x\geq {r}$ due
to~\eqref{++}. It remains to show that the Shapiro-Lopatinski\v{\i}
condition is met on $({r},\infty)\times(\partial\Omega\cap\mathscr
U_j)$. Thanks to~\eqref{STAB} the principal part of~\eqref{oper} for
$y_n=0$ (resp. the principal part of the operator of boundary
conditions in~\eqref{bc}) is little different from $
-(1+\lambda\mathsf
s'_{r}(x))^{-2}\partial_0^2-Q(y',\partial_{y'})-\partial^2_n$ (resp.
from $\partial_n$) uniformly in $x\geq {r}$, $\lambda\in\mathcal
D_\alpha$, and $y'$ as ${r}$ is large. Here
$-Q(y',\partial_{y'})-\partial^2_n$ is the principal part of
$\Delta_\Omega$. For $\xi=(\xi_0,\xi')$ on the unit sphere $S^n$ and
every bounded solution $\mathsf u\neq 0$ of
$$
\bigl((1+\lambda \mathsf s'_{r}(x))^{-2}\xi_0^2+Q(y',
\xi')-\partial_{y_n}^2\bigr)\mathsf u(y_n)=0\text{ for } y_n\in\Bbb
R_+
$$
we have
$$
\partial_{y_n}\mathsf u(0)=C\partial_{y_n} e^{-y_n\sqrt{(1+\lambda \mathsf s'_{r}(x))^{-2}\xi_0^2+Q( y', \xi')}}\upharpoonright_{y_n=0}=-C\sqrt{(1+\lambda \mathsf s'_{r}(x))^{-2}\xi_0^2+Q(y',\xi')}\neq
0
$$
because $\Re (1+\lambda \mathsf s'_{r}(x))^{-2}>0$ and $Q(y',\xi')$
is a positive definite quadratic form in $\xi'$. In other words, the
pair $\bigl(-(1+\lambda\mathsf
s'_{r}(x))^{-2}\partial_0^2+Q(y',\partial_{y'})-\partial^2_n,\partial_n\bigr)$
satisfies the Shapiro-Lopatinski\v{\i} condition or, equivalently,
the estimate
\begin{equation}\label{est}
\|\mathsf u\|_{ H^2(\Bbb R_+)}\leq C\bigl(\bigl\|\bigl((1+\lambda
\mathsf s'_{r}(x))^{-2}\xi_0^2+Q(
y',\xi')-\partial_{y_n}^2\bigr)\mathsf u\bigr\|_{ L^2(\Bbb
R_+)}+|\partial_{y_n}\mathsf u(0)|\bigr)
\end{equation}
holds, e.g.~\cite{KozlovMazyaRossmann, Lions Magenes}. The constant
$C$ in~\eqref{est} is independent of ${r}>0$, $\xi\in S^n$,
$\lambda\in\overline{\mathcal O_\alpha}$, $x\in\Bbb R_+$, and $y'\in
\mathscr U_j\cap\partial\Omega$. (Indeed, it suffices to note
that~\eqref{est} with ${r}$ replaced by  ${r}'$ can be obtained by
the change of variables $x\mapsto x+{r}-{r}'$, the function $\mathsf
s'_{r}$ varies over a compact subset of $\Bbb R_+$ only, and
$(1+\lambda \mathsf s'_{r}(x))^{-2}\xi_0^2+Q( y',\xi')$ is smooth in
$\lambda$, $\xi$, $x$, and $y'$.) This together with~\eqref{STAB}
implies that the estimate similar to~\eqref{est} is valid for the
principal parts of~\eqref{oper} and~\eqref{bc} provided $x\geq {r}$,
where ${r}$ is sufficiently large. This completes the proof.
\end{proof}

Now we are in position to prove Proposition~\ref{ess}.

\begin{proof} We will rely on the following  lemma  due to Peetre, see
e.g.~\cite[Lemma~5.1]{Lions Magenes} or
\cite[Lemma~3.4.1]{KozlovMazyaRossmann}:
\begin{itemize}
\item[]
{\it Let $\mathcal X,\mathcal Y$ and $\mathcal Z$ be Banach spaces,
where $\mathcal X$ is compactly embedded into $\mathcal Z$.
Furthermore, let $\mathcal L$ be a linear continuous operator from
$\mathcal X$ to $\mathcal Y$. Then the next two assertions are
equivalent: (i) the range of $\mathcal L$ is closed in $\mathcal Y$
and $\dim \ker \mathcal L<\infty$, (ii) there exists a constant $C$
such that
    \begin{equation}\label{coercive}
    \|u\|_{\mathcal X}\leq C(\|\mathcal L u\|_{\mathcal Y}+\|u\|_{\mathcal Z})\quad \forall u\in \mathcal X.
    \end{equation}}
\end{itemize}

{\it Sufficiency.} Here we assume that  $\mu$ does not
satisfy~\eqref{eq9} and establish an estimate of
type~\eqref{coercive} for the operator~\eqref{5}. Consider the
operator $({^\lambda\!\Delta}, {^\lambda\!\mathcal N}):
C_c^\infty(\mathcal M)\to C_c^\infty(\mathcal M)\times\
C_c^\infty(\partial\mathcal M)$  satisfying
$$
({^\lambda\!\Delta} u,v)_0+\langle {^\lambda\!\mathcal N}u,
v\rangle=\mathsf q_\lambda[u,v],\quad u\in C_c^\infty(\mathcal
M),v\in H^1(\mathcal M),
$$
where ${^\lambda\!\mathcal N}$ is the operator of boundary
conditions on $\partial\mathcal M$ and  $\langle\cdot,\cdot\rangle$
is the inner product on $(\partial\mathcal M,\mathsf
g\!\upharpoonright_{\partial\mathcal M})$. Clearly,
${^\lambda\!\mathcal N}u=0$ for $u\in\mathscr
D({^\lambda\!\Delta})$. Besides, in the local coordinates $(x,y)$ on
$\Bbb R_+\times\Omega$ we have~\eqref{oper} and~\eqref{bc}. In
particular, ${^0\!\Delta}$ is the Laplace-Beltrami operator  on
$(\mathcal M,\mathsf g)$ and $^0\!\mathcal N$ is the corresponding
operator of the Neumann boundary conditions. Consider also the
 conjugated operator $({^\lambda\!\Delta_\beta},{^\lambda\!\mathcal N_\beta})=e^{-\mathsf s \beta}({^\lambda\!\Delta}, {^\lambda\!\mathcal N})e^{\mathsf s
 \beta}$.

Introduce the Sobolev space $H^\ell(\Bbb R\times\Omega)$ as the
completion of the set $C_c^\infty(\Bbb R\times\Omega)$ with respect
to the norm
$$
\|u\|_{H^\ell(\Bbb R\times\Omega)}=\Bigl (\int_{\Bbb R}\sum_{k\leq
\ell}\|\partial^k_x u(x)\|^2_{ H^{\ell-k}(\Omega)}\,dx\Bigr)^{1/2},
$$
where $H^\ell(\Omega)$ is the Sobolev space on $\Omega$. Let
$H^{1/2}(\Bbb R\times\partial\Omega)$ be the space of traces
$u\!\upharpoonright_{\Bbb R\times\partial\Omega}$ of the functions
$u\in H^1(\Bbb R\times\Omega)$.   Denote by $\partial_\eta$ the
operator of Neumann boundary conditions on $\Bbb
R\times\partial\Omega$ taken with respect to the metric   $dx\otimes
dx+\mathfrak h$. Applying the Fourier transform $\mathcal
F_{x\mapsto \xi}$ we pass from the continuous operator
\begin{equation}\label{opo}
\bigl(\Delta_\Omega-(1+\lambda)^{-2}(\partial_x+\beta)^2-\mu,\partial_\eta\bigr):
H^2(\Bbb R\times\Omega)\to H^0(\Bbb R\times\Omega)\times
H^{1/2}(\Bbb R\times\partial\Omega)
\end{equation}
 to the operator $\bigl(\Delta_\Omega+(1+\lambda)^{-2}(\xi+i\beta)^2-\mu,\partial_\eta\bigr)$
of the Neumann  problem  on  $(\Omega,\mathfrak h)$. As $\mu$ does
not meet~\eqref{eq9}, the latter operator is invertible for all
$\xi$ and the inverse of~\eqref{opo} is given by
$$
\mathcal F^{-1}_{\xi\mapsto x}
\bigl(\Delta_\Omega+(1+\lambda)^{-2}(\xi+i\beta)^2-\mu,\partial_\eta\bigr)^{-1}\mathcal
F_{x\mapsto \xi};
$$
 see e.g.~\cite[Theorem 5.2.2]{KozlovMazyaRossmann} or
\cite[Theorem~2.4.1]{KozlovMaz`ya} for details. As a consequence we
have
\begin{equation}\label{op}
\|u\|_{ H^2(\Bbb R\times\Omega)}\leq
C\bigl\|\bigr(\Delta_\Omega-(1+\lambda)^{-2}(\partial_x+\beta)^2-\mu,\partial_\eta
\bigr)u\bigr\|_{H^0(\Bbb R\times\Omega)\times H^{1/2}(\Bbb
R\times\partial\Omega)}.
\end{equation}

 Let $\chi_T(x)=\chi(x-T)$, where $\chi\in  C^\infty(\Bbb R)$ is a cutoff function such that
$\chi(x)=1$ for $x\geq 1$ and $\chi(x)=0$ for $x\leq 0$. Then due
to~\eqref{STAB-},~\eqref{oper}, and~\eqref{bc} the constant $c(T)$
in the estimate
$$
\bigl\|\bigl({^\lambda\!\Delta_\beta}-\Delta_\Omega+(1
+\lambda)^{-2}(\partial_x+\beta)^2,{^\lambda\!\mathcal
N_\beta-\partial_\eta\bigr)\chi_T u }\|_{ H^0(\Bbb
R\times\Omega)\times H^{1/2}(\Bbb R\times\partial\Omega)}\leq
c(T)\|\chi_T u\|_{H^2(\Bbb R\times\Omega)}
$$
 tends to zero as
$T\to+\infty$. This together with~\eqref{op} implies that for all
sufficiently large $T$ the estimate
\begin{equation}\label{einf}
 \|\chi_Tu\|_{ H^{2}(\Bbb R\times\Omega)} \leq \texttt{C} \|({^\lambda\!\Delta_\beta} -  \mu,{^\lambda\!\mathcal N_\beta}) \chi_T u\|_{H^0(\Bbb R\times\Omega)\times H^{1/2}(\Bbb
R\times\partial\Omega)}
\end{equation}
  holds, where  $\texttt{C}=(1/C- c(T))^{-1}>0$.

Let $H^\ell(\mathcal M)$ be the Sobolev space introduced as the
completion of the set $C_c^\infty(\mathcal M)$ in the norm
$$
\| u\|_{H^\ell(\mathcal M)}=\Bigl(\|u\|^2_{H^\ell(\mathcal
M_c)}+\int_{\Bbb R_+}\sum_{k\leq \ell}\|\partial_x^k
u(x)\|^2_{H^{\ell-k}(\Omega)}\,dx\Bigr)^{1/2},
$$
where $\partial_x=\partial/\partial x$ and $H^\ell(\mathcal M_c)$ is
the Sobolev space on the compact manifold $\mathcal M_c$. By
$H^{1/2}(\partial\mathcal M)$ we denote the space of traces on
$\partial \mathcal M$ of functions in $H^1(\mathcal M)$. We extend
$\chi_T$ from its support in $\Pi$ to $\mathcal M$ by zero and
rewrite~\eqref{einf} in the form
$$
\|\chi_Tu\|_{ H^{2}(\mathcal M)} \leq  \texttt{C}
\bigl(\|\chi_T(^\lambda\!\Delta_\beta - \mu,{^\lambda\!\mathcal
N_\beta})u\|_{L^2(\mathcal M)\times H^{1/2}(\mathcal M)}
+\|[(^\lambda\!\Delta_\beta,{^\lambda\!\mathcal
N_\beta}),\chi_T]u\|_{L^2(\mathcal M)\times H^{1/2}(\mathcal M)}.
$$
For the commutator we have
$[({^\lambda\!\Delta_\beta},{^\lambda\!\mathcal
N_\beta}),\chi_T]u=[({^\lambda\!\Delta_\beta},{^\lambda\!\mathcal
N_\beta}),\chi_T](1-\chi_{2T})u$ and therefore
$$
\|[({^\lambda\!\Delta_\beta},{^\lambda\!\mathcal
N_\beta}),\chi_T]u\|_{ L^2(\mathcal M)\times H^{1/2}(\mathcal
M)}\leq C \|(1-\chi_{2T}) u\|_{H^2(\mathcal M)}.
$$
Moreover, as a consequence of Lemma~\ref{lem} the  local elliptic
coercive estimate
$$
\|(1-\chi_{2T})u\|_{H^2(\mathcal M)}\leq
C\bigl(\|(1-\chi_{3T})({^\lambda\!\Delta_\beta}-\mu
,{^\lambda\!\mathcal N_\beta})u\|_{L^2(\mathcal M)\times
H^{1/2}(\partial\mathcal M)}+\|(1-\chi_{3T}) u\|\bigr)
$$
is valid~\cite{Lions Magenes}. From the last three estimates it
follows that
\begin{equation}\label{coercive!}
\| u\|_{H^2(\mathcal M)}\leq C\bigl(\|({^\lambda\!\Delta_\beta}-\mu
,{^\lambda\!\mathcal N_\beta})u\|_{L^2(\mathcal M)\times
H^{1/2}(\partial\mathcal M)}+\|(1-\chi_{3T}) u\|\bigr).
\end{equation}
In particular,~\eqref{coercive!} implies that
$\|\cdot\|_{H^2(\mathcal M)}$ is an equivalent norm in  $\mathscr
D({^\lambda\!\Delta_\beta})=\{u\in H^2(\mathcal
M):{^\lambda\!\mathcal N_\beta}u=0 \}$.

Let $\mathsf w$ be a bounded rapidly decreasing at infinity positive
function on $\mathcal M$ such that the embedding of $H^2(\mathcal
M)$ into the weighted space $L^2(\mathcal M,\mathsf w)$ with the
norm $\|\mathsf w \cdot\|$ is compact. Then $\mathscr
D({^\lambda\!\Delta_\beta})$ is compactly embedded into
$L^2(\mathcal M,\mathsf w)$ and~\eqref{coercive!} is an estimate of
type~\eqref{coercive} for the operator~\eqref{5}. Thus the range
of~\eqref{5} is closed and the kernel is finite-dimensional. In
order to see that the cokernel is finite-dimensional, one can apply
the same argument and obtain an estimate of type~\eqref{coercive}
for the adjoint operator.

{\it Necessity.} Now we assume that $\mu$ meets~\eqref{eq9} for some
$j$ and show that the operator~\eqref{5} is not Fredholm. It
suffices to find a sequence $\{v_\ell\}_{\ell=1}^\infty$ in
$\mathscr D({^\lambda\!\Delta})$ violating the
estimate~\eqref{coercive!}.

We first show that for a regular point $\mu_0$ of the m-sectorial
operator ${^\lambda\!\Delta_\beta}$ the continuous operator
\begin{equation}\label{ad}
 ({^\lambda\!\Delta_\beta} -  \mu_0,{^\lambda\!\mathcal N_\beta}): H^2(\mathcal M)\to L^2(\mathcal M)\times H^{1/2}(\partial\mathcal M)
\end{equation}
realizes an isomorphism. With this aim in mind we  replace
$(1-\chi_{3T}) u$ by $\mathsf w u$ in~\eqref{coercive!}.  Then  by
the Peetre lemma  the range of~\eqref{ad} is closed. It is easy to
see that the elements in the cokernel of the operator~\eqref{ad} are
of the form $(v, v\!\upharpoonright_{\partial\mathcal M})$ with
$v\in\ker({^\lambda\!\Delta_\beta^*} - \mu_0)$, where
${^\lambda\!\Delta_\beta^*}$ is  adjoint to the m-sectorial operator
${^\lambda\!\Delta_\beta}$  in $L^2(\mathcal M)$ with domain
$\mathscr D({^\lambda\!\Delta_\beta})$. Indeed, let $(v,\underline
v)$ be in the kernel of the adjoint operator
$$ ({^\lambda\!\Delta_\beta} -  \mu_0,{^\lambda\!\mathcal N_\beta})^*: L^2(\mathcal M)\times (H^{1/2}(\partial\mathcal M))^* \to (H^{2}(\mathcal M))^*.
$$
Then $({^\lambda\!\Delta_\beta}u-\mu_0 u,v)_0+\langle
{^\lambda\!\mathcal N_\beta} u, \underline v\rangle=0$ for all $u\in
H^2(\mathcal M)$, where $\langle\cdot,\cdot\rangle$ is extended to
$H^{1/2}(\partial\mathcal M)\times (H^{1/2}(\partial\mathcal M))^*$.
Since $\mathscr D({^\lambda\!\Delta_\beta})\subset H^2(\mathcal M)$,
we immediately see that $v\in \ker({^\lambda\!\Delta_\beta^*} -
\mu_0)$. Then for $u\in H^2(\mathcal M)$ the Green identity gives $
\langle {^\lambda\!\mathcal N_\beta} u, \underline v -
v\!\upharpoonright_{\partial\mathcal M}\rangle=0$ and therefore
$\underline v = v\!\upharpoonright_{\partial\mathcal M}$. As a
consequence,~\eqref{ad} is an isomorphism for
$\mu_0\notin\sigma({^\lambda\!\Delta_\beta})$.

Let $\chi\in C^\infty(\Bbb R)$ be  such that $\chi(x)=1$ for
$|x-3|\leq 1$, and $\chi(x)=0$ for $|x-3|\geq 2$. Consider the
functions
\begin{equation}\label{test}
u_\ell(x,
y)=\chi(x/\ell)\exp\bigl({i(1+\lambda)\sqrt{\mu-\nu_j}x}\bigr)\Phi(
y),\quad (x,y)\in \Bbb R\times\Omega,
\end{equation}
where $\Phi$ is an eigenfunction of the Neumann Laplacian
$\Delta^N_\Omega$ corresponding to the eigenvalue $\nu_j$. It is
clear that $u_\ell$ satisfies the Neumann boundary condition
$\partial_\eta u_\ell=0$ on $\Bbb R\times\partial\Omega$. As $\mu$
meets the condition~\eqref{eq9}, the exponent in~\eqref{test} is an
oscillating function of $x\in\Bbb R$. Straightforward calculation
shows that
\begin{equation}\label{s--}
\bigl\|\bigl(\Delta_\Omega-(1+  \lambda)^{-2}(\partial_x+\beta)^2
-\mu\bigr) u_\ell\bigr\|_ {H^0(\Bbb R\times\Omega)}\leq
const,\quad\|u_\ell\|_{ H^2(\Bbb R\times\Omega)}\to\infty
\end{equation}
as $\ell\to +\infty$. We extend the functions $u_\ell$ from $\Pi$ to
$\mathcal M$ by zero and set
$$
v_\ell=u_\ell-({^\lambda\!\Delta_\beta} - \mu_0,{^\lambda\!\mathcal
N_\beta})^{-1}(0,{^\lambda\!\mathcal N_\beta} u_\ell),
$$
where $\mu_0$ is a regular point of the m-sectorial operator
${^\lambda\!\Delta_\beta}$. Clearly, $v_\ell\in\mathscr
D({^\lambda\!\Delta_\beta})$. We also have
\begin{equation}\label{s-}
\|({^\lambda\!\Delta_\beta} - \mu_0,{^\lambda\!\mathcal
N_\beta})^{-1}(0,{^\lambda\!\mathcal N_\beta}
u_\ell)\|_{H^2(\mathcal M)} \leq C \|({^\lambda\!\mathcal
N_\beta}-\partial_\eta)u_\ell\|_{H^{1/2}(\partial\mathcal M)}\leq
\texttt{C}_\ell\|u_\ell\|_{ H^2(\Bbb R\times\Omega)},
\end{equation}
where $\texttt{C}_\ell\to 0$ as $\ell\to+\infty$. Hence
\begin{equation}\label{s+}
\|v_\ell\|_{H^2(\mathcal M)}\geq \|u_\ell\|_{ H^2(\Bbb
R\times\Omega)}-\texttt{C}_\ell\|u_\ell\|_{ H^2(\Bbb
R\times\Omega)},\quad \|v_\ell-u_\ell\|_{H^2(\mathcal M)}\leq
\texttt{C}_\ell\|u_\ell\|_{ H^2(\Bbb R\times\Omega)}.
\end{equation}
Assume that the estimate~\eqref{coercive!} is valid. Without loss of
generality we can take a rapidly decreasing weight $\mathsf w$ such
that $\|\mathsf w u_\ell\|\leq Const$ uniformly in $\ell$ and the
embedding $H^2(\mathcal M) \hookrightarrow L^2(\mathcal M;\mathsf
w)$ is compact. It is clear that $\|\mathsf w u\|\leq
c\|u\|_{H^2(\mathcal M)}$ with some independent of $u\in
H^2(\mathcal M)$ constant $c$. Therefore
\begin{equation}\label{s.}
\|\mathsf w v_\ell\|\leq Const + c \texttt{C}_\ell\|u_\ell\|_{
H^2(\Bbb R\times\Omega)},
\end{equation}
cf.~\eqref{s+}. The estimate
$$
\|({^\lambda\!\Delta_\beta}-\Delta_\Omega+(1+
\lambda)^{-2}(\partial_x+\beta)^2)u_\ell\|\leq\texttt{c}_\ell\|u_\ell\|_{H^2(\Bbb
R\times\Omega)},
$$
 where $\texttt{c}_\ell\to 0$ as $\ell\to+\infty$, together with~\eqref{s--} and~\eqref{s+}  gives
\begin{equation}\label{s++}
\begin{aligned}
\|({^\lambda\!\Delta_\beta}-\mu)v_\ell\|\leq C\bigl\|\bigl(-(1+
\lambda)^{-2}(\partial_x+\beta)^2+\Delta_\Omega
-\mu\bigr) u_\ell\bigr\|_ {H^0(\Bbb R\times\Omega)}\\
+\|({^\lambda\!\Delta_\beta}-\Delta_\Omega+(1+
\lambda)^{-2}(\partial_x+\beta)^2)u_\ell\|+\|({^\lambda\!\Delta_\beta}-\mu)(v_\ell-u_\ell)\|
\\
\leq C\cdot const+(\texttt{c}_\ell+\mathsf{C}
\texttt{C}_\ell)\|u_\ell\|_{H^2(\Bbb R\times\Omega)}.
\end{aligned}
\end{equation}
Finally, as a consequence
of~\eqref{s+},~\eqref{coercive!},~\eqref{s++} and~\eqref{s.},  we
get
\begin{equation}\label{final}
\begin{aligned}
\|u_\ell\|_{ H^2(\Bbb R\times\Omega)}-\texttt{C}_\ell\|u_\ell\|_{
H^2(\Bbb R\times\Omega)} \leq\|v_\ell\|_{H^2(\mathcal M)}\leq
\mathrm C(\|({^\lambda\!\Delta_\beta}-\mu)v_\ell\|+\|\mathsf w
v_\ell\|)
\\
\leq \mathrm C( C\cdot const +(\mathsf
C\texttt{C}_\ell+\texttt{c}_\ell)\|u_\ell\|_{ H^2(\Bbb
R\times\Omega)}+Const + c \texttt{C}_\ell\|u_\ell\|_{ H^2(\Bbb
R\times\Omega)}).
\end{aligned}
\end{equation}
Since $\texttt{c}_\ell\to 0$ and $\texttt{C}_\ell\to 0$, the
inequalities~\eqref{final} imply that the value $\|u_\ell\|_{
H^2(\Bbb R\times\Omega)}$ remains bounded as $\ell\to+\infty$. This
contradicts~\eqref{s--}. Thus the sequence
$\{v_\ell\}_{\ell=1}^\infty$ violates the
estimate~\eqref{coercive!}. \end{proof}

\section{Exponential decay of solutions in infinite PMLs}\label{s6}

 Consider the algebra  $\mathscr E$ of all entire functions
 $\mathbb C\ni z\mapsto F(z,\cdot)\in C^\infty(\Omega)$ with the following property:
in any sector $|\Im z|\leq (1-\epsilon) \Re z$ with $\epsilon>0$ the
value $\|F(z,\cdot)\|_{L^2(\Omega)}$ decays faster than any inverse
power of $\Re z$  as $\Re z\to+\infty$.  Examples of functions $F\in
\mathscr E$ are $F(z,y)=e^{-\gamma z^2}P(z,y)$, where  $\gamma>0$
and $P(z,y)$ is an arbitrary polynomial  in $z$ with coefficients in
$C^\infty(\Omega)$. We say that $f\in L^2(\mathcal M)$ is an
analytic vector, if $f(x,y)=F(x,y)$  for some $F\in\mathscr E$ and
all $(x,y)\in\Pi$. For any $f$ in the set $\mathcal A$ of all
analytic vectors  we can define the analytic function $\mathcal
O_\alpha\ni\lambda\mapsto f_\lambda\in L^2(\mathcal M)$ by setting
$f_\lambda=f$ on $\mathcal M\setminus ({r},\infty)\times\Omega$ and
$f_\lambda(x,y)=f(x+\lambda \mathsf s_{r}(x),y)$ for $(x,y)\in
({r},\infty)\times\Omega$. The set $\{f_\lambda: f\in\mathcal A\}$
is dense in $L^2(\mathcal M)$ for any $\lambda\in\mathcal D_\alpha$,
see, e.g.,~\cite[Theorem 3]{Hunziker}.

\begin{theorem}\label{T1} Assume that $\mu_0\in \Bbb R\setminus
\sigma(\Delta_\Omega^N)$ is not an eigenvalue (i.e. not a trapped
mode) of the self-adjoint Neumann Laplacian $^0\!\Delta$ in
$L^2(\mathcal M)$.  Then the following assertions are valid.
\begin{itemize}
\item[1.] For any $f\in \mathcal A$ there exist outgoing $u_-$ and
incoming $u_+$ solutions defined by the limiting absorption
principle
\begin{equation}\label{LAP}
u_+=\lim_{\epsilon \uparrow 0}({^0\!\Delta}-\mu_0-i\epsilon)^{-1}f,
\quad u_-=\lim_{\epsilon \downarrow
0}({^0\!\Delta}-\mu_0-i\epsilon)^{-1}f,
\end{equation}
where the limits are taken in the space $L^2_{loc}(\mathcal M)$.

\item[2. ] For $\lambda\in\mathcal O_\alpha$ with $\Im\lambda<0$ (resp. $\Im\lambda>0$)
the m-sectorial operator ${^\lambda\!\Delta}$ in $L^2(\mathcal M)$
 models PMLs on $({r},\infty)\times\Omega$ for incoming (resp. outgoing) solutions in the sense
that $u_\lambda=({^\lambda\!\Delta}-\mu_0)^{-1} f_\lambda$ coincides
on $\mathcal M\setminus({r},\infty)\times\Omega$ with the incoming
solution $u_+$
 (resp. with the outgoing solution $u_-$) and
\begin{equation}\label{interval}
\|e^{\beta\mathsf s}u_\lambda\|_{H^2(\mathcal M)}\leq
C(\mu_0,\lambda)\|e^{\beta\mathsf s} f_\lambda\| ,\quad
0\leq\beta<min_{\nu\in\sigma(\Delta^N_\Omega)}|\Im\{
(1+\lambda)\sqrt{\mu_0-\nu}\}|.
\end{equation}
The estimate~\eqref{interval} shows that $u_\lambda$ decays
exponentially in the PMLs.
\end{itemize}
\end{theorem}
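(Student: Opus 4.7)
The plan is an Aguilar--Balslev--Combes--Simon style spectral deformation argument tailored to this geometric set-up. For real $\lambda\in\mathcal O_\alpha\cap\Bbb R$ the map $\varkappa_\lambda(x,y)=(x+\lambda\mathsf s_r(x),y)$ is a diffeomorphism satisfying $\varkappa_\lambda^\ast\mathsf g=\mathsf g_\lambda$, so ${^\lambda\!\Delta}$ is unitarily equivalent to the selfadjoint ${^0\!\Delta}$ via the pullback by $\varkappa_\lambda$ combined with the multiplier $\varrho_\lambda^{1/2}$. Consequently, for analytic vectors $f,g\in\mathcal A$ and $\mu$ in the resolvent set of ${^0\!\Delta}$,
\begin{equation*}
\bigl(({^0\!\Delta}-\mu)^{-1}f,g\bigr)_0=\bigl(({^\lambda\!\Delta}-\mu)^{-1}f_\lambda,g_{\bar\lambda}\bigr)_0.
\end{equation*}
Because $\lambda\mapsto f_\lambda$, $g_{\bar\lambda}$ are analytic $\mathcal O_\alpha\to L^2(\mathcal M)$ and Lemma~\ref{L1} provides an analytic family of m-sectorial operators, the right-hand side continues analytically in $\lambda$ to all of $\mathcal O_\alpha$ for $\mu$ in the resolvent set of ${^\lambda\!\Delta}$. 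This joint analyticity is the engine for both parts of the theorem.

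For Part~1, Proposition~\ref{ess} with $\beta=0$ places $\sigma_{ess}({^\lambda\!\Delta})$ on rays issuing from the points of $\sigma(\Delta_\Omega^N)$ at angle $-2\arg(1+\lambda)$. For $\Im\lambda>0$ (resp.\ $\Im\lambda<0$) these rays leave the real axis into the closed lower (resp.\ upper) half-plane, so a sector adjacent to $\Bbb R$ in the opposite half-plane is cleared. Any discrete eigenvalue of ${^\lambda\!\Delta}$ that persists in this cleared sector is $\lambda$-independent by analyticity of the family, and the matrix-element identity above (evaluated against an appropriate analytic vector) forces any such eigenvalue to be an $L^2$-eigenvalue of ${^0\!\Delta}$, i.e.\ a trapped mode. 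Since $\mu_0\notin\sigma(\Delta_\Omega^N)$ is not a trapped mode, $\mu_0$ is a regular point of ${^\lambda\!\Delta}$. The matrix element thus extends continuously to $\mu=\mu_0$ from the corresponding side of the real axis, producing the weak limits $u_\pm$ in~\eqref{LAP}. Upgrading to $L^2_{loc}$-convergence follows from the local elliptic coercive estimate of the form~\eqref{coercive!} together with density of $\{f_\lambda:f\in\mathcal A\}$ in $L^2(\mathcal M)$.

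For Part~2, the function $u_\lambda:=({^\lambda\!\Delta}-\mu_0)^{-1}f_\lambda$ is well defined by Part~1. Pairing $u_\lambda$ against $g\in\mathcal A$ supported in $\mathcal M\setminus(r,\infty)\times\Omega$, where $f_\lambda=f$, $g_{\bar\lambda}=g$, and $\varkappa_\lambda$ is the identity, shows that $u_\lambda$ coincides on that set with the corresponding limit in~\eqref{LAP}. For the exponential estimate, set $w=e^{\beta\mathsf s}u_\lambda$ and use $e^{\beta\mathsf s}{^\lambda\!\Delta}e^{-\beta\mathsf s}={^\lambda\!\Delta}_{-\beta}$ to rewrite
\begin{equation*}
({^\lambda\!\Delta}_{-\beta}-\mu_0)\,w=e^{\beta\mathsf s}f_\lambda.
\end{equation*}
The hypothesis $0\leq\beta<\min_{\nu\in\sigma(\Delta_\Omega^N)}|\Im\{(1+\lambda)\sqrt{\mu_0-\nu}\}|$ is precisely the statement that $\mu_0-(1+\lambda)^{-2}(\xi-i\beta)^2\notin\sigma(\Delta_\Omega^N)$ for every $\xi\in\Bbb R$, so Proposition~\ref{ess} applied with $-\beta$ in place of $\beta$ gives $\mu_0\notin\sigma_{ess}({^\lambda\!\Delta}_{-\beta})$. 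Moreover $\mu_0$ is not an eigenvalue of ${^\lambda\!\Delta}_{-\beta}$: any hypothetical eigenfunction $W$ would produce $e^{-\beta\mathsf s}W\in H^2(\mathcal M)$, a nonzero eigenfunction of ${^\lambda\!\Delta}$ at $\mu_0$, which was ruled out above. Hence the Fredholm operator~\eqref{5} at $\mu=\mu_0$ with $\beta$ replaced by $-\beta$ is bijective, and~\eqref{interval} follows with $C(\mu_0,\lambda)$ equal to the norm of its bounded inverse.

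The main obstacle is the ABC-style identification of the discrete eigenvalues of ${^\lambda\!\Delta}$ inside the cleared sector with trapped modes of ${^0\!\Delta}$. This needs analyticity in $\lambda$ of the resolvent matrix element against analytic vectors, continuity of the spectrum of an analytic m-sectorial family, and density in $L^2(\mathcal M)$ of $\{f_\lambda:f\in\mathcal A\}$ for each fixed $\lambda\in\mathcal O_\alpha$ (the last recorded just above the theorem). A secondary, more routine, point is the passage from weak analytic continuation to $L^2_{loc}$-convergence in the LAP, which is handled by the local elliptic regularity coming from Lemma~\ref{lem}.
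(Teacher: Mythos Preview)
Your argument is essentially the paper's own Aguilar--Balslev--Combes--Simon deformation: analytic continuation of the resolvent matrix element from real to complex $\lambda$, localization of $\sigma_{ess}({^\lambda\!\Delta})$ via Proposition~\ref{ess}, exclusion of $\mu_0$ as a pole by identifying poles with trapped modes of ${^0\!\Delta}$, and then the conjugated operator for the exponential bound. Two small points. First, the paper works with the pointwise identity $\varrho({^0\!\Delta}-\mu)^{-1}f=\varrho({^\lambda\!\Delta}-\mu)^{-1}f_\lambda$ for a cutoff $\varrho\in C_c^\infty$ supported in $\mathcal M\setminus(r,\infty)\times\Omega$, which gives the $L^2_{loc}$ limits and the identification $u_\lambda\!\upharpoonright_{\mathcal M_r}=u_\pm\!\upharpoonright_{\mathcal M_r}$ in one stroke, without the weak-to-strong upgrade you sketch; your pairing route works too, but note that ``$g\in\mathcal A$ supported in $\mathcal M\setminus(r,\infty)\times\Omega$'' forces $g\!\upharpoonright_\Pi\equiv 0$ by analyticity, so you really want $g\in C_c^\infty(\mathcal M\setminus(r,\infty)\times\Omega)$ (the right-hand side is still analytic in $\lambda$ because $g_{\bar\lambda}=g$). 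Second, the paper closes Part~2 by invoking analytic Fredholm theory on the simply connected component of $\Bbb C\setminus\sigma_{ess}({^\lambda\!\Delta}_\beta)$ containing the exterior of the sector, which is what supplies index zero and hence bijectivity once the kernel is trivial; your last step implicitly uses this.
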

\begin{proof} 1. We need to prove that for any $\varrho\in C_c^\infty(\mathcal
M)$ and $f\in\mathcal  A$  the function
$\varrho({^0\!\Delta}-\mu_0-i\epsilon)^{-1}f$ tends to some limits
in $L^2(\mathcal M)$ as $\epsilon \downarrow 0$ and $\epsilon
\uparrow 0$. Take a sufficiently large ${r}$ such that $\supp
\varrho\cap ({r},\infty)\times\Omega=\varnothing$. Let $\mu$ be
outside the sector of m-sectorial operator ${^\lambda\!\Delta}$.
Then for any real $\lambda\in\mathcal O_\alpha$ the change of
variable $x\mapsto x+ \lambda\mathsf s_{r}(x)$ outside $\supp
\varrho$ implies
\begin{equation}\label{z}
\varrho({^0\!\Delta}-\mu)^{-1}f=\varrho
({^\lambda\!\Delta}-\mu)^{-1}f_\lambda.
\end{equation}
From Lemma~\ref{L1} and~\eqref{LF} it follows that the resolvent
$({^\lambda\!\Delta}-\mu)^{-1}$ is an analytic function of
$\lambda\in\mathcal O_\alpha$ with values in the space of bounded
operators in $L^2(\mathcal M)$~\cite[Theorem XII.7]{Simon Reed iv}.
Thus~\eqref{z} extends by analyticity to all $\lambda\in\mathcal
O_\alpha$. As $\mu$ is a regular point of ${^\lambda\!\Delta}$ in
the the simply connected set $\Bbb
C\setminus\sigma_{ess}({^\lambda\!\Delta})$ (see
Proposition~\ref{ess}), the Fredholm analytic theory implies that
the resolvent
$$
\Bbb C\setminus\sigma_{ess}({^\lambda\!\Delta})\ni\mu\mapsto
({^\lambda\!\Delta}-\mu)^{-1}: L^2(\mathcal M)\to \mathscr
D({^\lambda\!\Delta})
$$
is a meromorphic operator function; see,
e.g.,~\cite[Appendix]{KozlovMaz`ya}. It remains to show that
$\mu_0\in \Bbb C\setminus\sigma_{ess}({^\lambda\!\Delta})$ is not a
pole. Then the right hand side of~\eqref{z} with $\Im\lambda<0$
(resp. $\Im\lambda>0$) provides the left hand side with analytic
continuation from $\mu=\mu_0+i\epsilon$ to $\mu=\mu_0$ as
$\epsilon\downarrow 0$  (resp. $\epsilon \uparrow 0$).

For $\mu$ outside of the sector of ${^\lambda\!\Delta}$ and real
$\lambda\in\mathcal O_\alpha$ by the change of variable $x\mapsto x+
\lambda\mathsf s_{r}(x)$ we obtain
\begin{equation}\label{abc}
\bigl(({^0\!\Delta}-\mu)^{-1}f,g\bigr)_0=\bigl(({^\lambda\!\Delta}-\mu)^{-1}f_\lambda,g_{\bar\lambda}\bigr)_{\lambda},
\quad f,g\in\mathcal A.
\end{equation}
This equality extends by analyticity to all $\lambda\in\mathcal
O_\alpha$.  Suppose, by contradiction, that $\mu_0$ is a pole of
$({^\lambda\!\Delta}-\mu)^{-1}$ with $\Im\lambda\lessgtr 0$. As the
sets $\{f_\lambda: f\in\mathcal A\}$ and
$\{g_{\bar\lambda}:g\in\mathcal A\}$ are dense in $L^2(\mathcal M)$,
the right hand side of~\eqref{abc} has a pole at $\mu_0$ for some
$f$ and $g$. Then~\eqref{abc} implies that $(\mathsf P_{\mu_0}
f,g)\neq 0$, where $\mathsf P_{\mu_0}$ is the projection onto the
eigenspace of the selfadjoint operator $^0\!\Delta$, and thus
$\ker(\Delta-\mu_0)\neq\{0\}$. This is a contradiction.

2. Since~\eqref{z} is valid for any $\varrho\in C_c^\infty(\mathcal
M)$ such that
$\supp\varrho\cap({r},\infty)\times\Omega=\varnothing$, our
construction in the proof of assertion 1 shows that $u_\lambda$
coincides on $\mathcal M\setminus({r},\infty)\times\Omega$ with
$u_+$
 (resp.  $u_-$) if $\Im\lambda<0$ (resp.
 $\Im\lambda>0$). The condition on $\beta$ in~\eqref{interval}
guarantees that $\mu_0$ together with all points outside of the
sector of m-sectorial operator ${^\lambda\!\Delta_\beta}$
 is in the simply connected subset of $\Bbb
 C\setminus\sigma_{ess}({^\lambda\!\Delta_\beta})$, see
 Proposition~\ref{ess}. Then the Fredholm analytic theory implies
that $\mu_0$ is either an eigenvalue of ${^\lambda\!\Delta_\beta}$
or $\mu_0\notin\sigma({^\lambda\!\Delta_\beta})$,
e.g.,~\cite[Appendix]{KozlovMaz`ya}. The inclusion $\Psi\in \ker
({^\lambda\!\Delta_\beta}-\mu_0)$ gives $e^{-\beta\mathsf
s}\Psi\in\ker ({^\lambda\!\Delta} -\mu_0)=\{0\}$ as $\mu_0$ is not a
pole of $({^\lambda\!\Delta} -\mu)^{-1}$. Hence
$\mu_0\notin\sigma({^\lambda\!\Delta_\beta})$. This together with
the equality $e^{\beta\mathsf s}u_\lambda=({^\lambda\!\Delta_\beta}
-\mu_0)^{-1}e^{\beta \mathsf s} f_\lambda$ justifies the
estimate~\eqref{interval}, cf.~\eqref{coercive!}.
\end{proof}

\section{Finite PMLs, stability and exponential convergence of the PML
method}\label{s7} Consider the compact manifold $\mathcal
M_{R}=\mathcal M\setminus ({R},\infty)\times\Omega$. The boundary
$\partial \mathcal M_{R}$ of $\mathcal M_{R}$ is piecewise smooth.
It has two conic points
 in the case of a 1-dimensional manifold
$\Omega$ and the edge $\partial\Omega\times\{{R}\}$ otherwise. We
denote $\partial \mathcal
M_{R}\setminus(\{{R}\}\times\Omega)=\partial \mathcal M_{R}^-$.
Introduce the Sobolev space $H^2(\mathcal M_{R})$ as the completion
of the set $C_c^\infty(\mathcal M_{R})$ with respect to the norm
$$
\|u\|_{ H^2(\mathcal M_{R})}=\Bigl(\|u\|^2_{ H^2(\mathcal
M_c)}+\sum_{\ell\leq 2}\int_0^{R}\|\partial_x^\ell u(x)\|^2_{
H^{2-\ell}(\Omega)}\,dx\Bigr)^{1/2},$$ where $H^2(\mathcal M_c)$ and
$H^{2-\ell}(\Omega)$ are the Sobolev spaces on the smooth compact
manifolds $\mathcal M_c$ and $\Omega$. Consider the problem with
finite PMLs: {\it given $F\in L^2(\mathcal M_{R})$ find a solution
$v\in H^2(\mathcal M_{R})$ to the problem}
\begin{equation}\label{MR}
({^\lambda\!\Delta}-\mu_0)v=F \text{ on } \mathcal
M_{R};\quad{^\lambda\!\mathcal N} v=0 \text{ on } \partial \mathcal
M_{R}^-;\quad v=0 \text{ on } \{{R}\}\times\Omega.
\end{equation}
  In the next theorem we prove a stability result for this problem.
\begin{theorem}\label{thMR} Let $\lambda\in\mathcal O_\alpha\setminus\Bbb R$. Assume that  $\mu_0\in \Bbb
R\setminus\sigma(\Delta_\Omega)$ is not an eigenvalue of the
selfadjoint Neumann Laplacian $^0\!\Delta$ in $L^2(\mathcal M)$.
Then there exists a large number ${R}_0>0$ such that for all
${R}>{R}_0$ and $F\in L^2(\mathcal M_{R})$,~\eqref{MR} has a unique
solution $v\in H^2(\mathcal M_{R})$. Moreover, the estimate
\begin{equation}\label{estMR}
\|v\|_{ H^2(\mathcal M_{R})}\leq C \|F\|_{ L^2(\mathcal M_{R})}
\end{equation}
holds with independent of ${R}>{R}_0$ and $F$ constant $C$.
\end{theorem}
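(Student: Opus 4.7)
My plan is to combine Fredholm theory for the elliptic BVP on the compact manifold $\mathcal M_R$ with a contradiction argument in the spirit of the compound expansions announced in the introduction, crucially using Theorem~\ref{T1} in the form $\mu_0\notin\sigma({^\lambda\!\Delta})$ and, later, $\mu_0\notin\sigma({^\lambda\!\Delta}_\beta)$ from Proposition~\ref{ess}.

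The first step is the Fredholm property. By Lemma~\ref{lem}, (MR) is regular elliptic: the Dirichlet condition at $\{R\}\times\Omega$ trivially satisfies the Shapiro-Lopatinski\v{\i} condition and the $\lambda$-Neumann condition was handled there. Hence the operator $\mathcal T_R:\mathscr D(\mathcal T_R)\to L^2(\mathcal M_R)$ associated with (MR), with $\mathscr D(\mathcal T_R)$ the $H^2$-subspace satisfying both boundary conditions in~\eqref{MR}, is Fredholm. A homotopy $\lambda\to 0$ in $\mathcal O_\alpha$ shows that its index equals that of the selfadjoint Neumann--Dirichlet Laplacian, hence zero; invertibility therefore reduces to injectivity, and the open mapping theorem supplies a bound $\|v\|_{H^2}\leq C(R)\|F\|$ for each fixed $R$. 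The real task is uniformity of $C(R)$ as $R\to\infty$.

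Suppose~\eqref{estMR} fails uniformly: pick $R_k\to\infty$ and $v_k\in\mathscr D(\mathcal T_{R_k})$ with $\|v_k\|_{H^2(\mathcal M_{R_k})}=1$ while $F_k:=\mathcal T_{R_k}v_k\to 0$ in $L^2$. The Dirichlet condition at $\{R_k\}\times\Omega$ lets me extend $v_k$ by zero to $\tilde v_k\in H^1(\mathcal M)$. Interior elliptic regularity from Lemma~\ref{lem} bounds $\|v_k\|_{H^2(\mathcal M_N)}$ uniformly in $k$ for each fixed $N$, so weak compactness and Rellich give a subsequence with $\tilde v_k\rightharpoonup v_\star$ in $H^1(\mathcal M)$ and strongly in $H^1_{\mathrm{loc}}$. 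Passing to the limit in the weak form
\[
\mathsf q_\lambda[v_k,\phi]-\mu_0(v_k,\phi)_0=(F_k,\phi)_0,\quad \phi\in C_c^\infty(\mathcal M),
\]
and extending by density to $\phi\in H^1(\mathcal M)$, I obtain $v_\star\in\mathscr D({^\lambda\!\Delta})$ with $({^\lambda\!\Delta}-\mu_0)v_\star=0$. Since $\mu_0\notin\sigma({^\lambda\!\Delta})$ by Theorem~\ref{T1}, $v_\star=0$; combined with interior elliptic regularity this forces $\|v_k\|_{H^2(\mathcal M_N)}\to 0$ for every fixed $N$, so the $H^2$-mass of $v_k$ must concentrate in the PML tail $(N,R_k)\times\Omega$.

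Ruling out this concentration is the crux. Fix $N$ large, pick $\psi$ vanishing on $\mathcal M_N$ and equal to $1$ on $[N+1,\infty)\times\Omega$, and consider $\psi v_k$ on $(N,R_k)\times\Omega$: it solves a nonhomogeneous problem with interior RHS $\psi F_k+[({^\lambda\!\Delta}-\mu_0),\psi]v_k\to 0$ in $L^2$, Dirichlet at both ends (inherited at $R_k$, produced by $\psi$ at $N$), and a nonhomogeneous $\lambda$-Neumann datum on the lateral boundary, supported in the commutator strip $(N,N+1)\times\partial\Omega$ and also tending to zero. A cylindrical a~priori estimate of the type used in Proposition~\ref{ess}---Fourier decomposition in $x$ of the reduced constant-coefficient model, together with the coefficient stabilization~\eqref{STAB} and the strong ellipticity~\eqref{++} to absorb the perturbation for $N$ large---then yields $\|\psi v_k\|_{H^2}\to 0$, with constants controlled by $\|({^\lambda\!\Delta}_\beta-\mu_0)^{-1}\|$ on the infinite cylinder, which is finite by Theorem~\ref{T1}. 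Combined with $\|v_k\|_{H^2(\mathcal M_N)}\to 0$, this contradicts $\|v_k\|_{H^2(\mathcal M_{R_k})}=1$. The main obstacle is precisely this uniform-in-$R_k$ cylindrical estimate: the mixed Dirichlet-Neumann edge at $\{R_k\}\times\partial\Omega$ together with the $x$-dependence of ${^\lambda\!\mathcal N}$---the \emph{new effect} flagged in the introduction---requires a nonhomogeneous BVP analysis on the finite cylinder that extends the template~\eqref{opo}--\eqref{op} to Dirichlet closure at the far end and spatially varying boundary operator.
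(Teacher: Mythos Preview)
Your route is genuinely different from the paper's. The paper never argues by contradiction or compactness: it builds an explicit right parametrix by a \emph{compound expansion}. Given $(F,G)$, it glues a solution $u_\lambda$ of the infinite-PML problem (via the isomorphism~\eqref{ad} with weight $e^{\beta\mathsf s}$) to a solution $\mathsf u$ of a \emph{second limit problem} on the half-infinite cylinder $(-\infty,R)\times\Omega$ with Dirichlet at $x=R$ (Lemma~\ref{al}), sets $w=\varrho_R u_\lambda+(1-\varrho_{R/3})\mathsf u$, checks that $({^\lambda\!\Delta}-\mu_0,{^\lambda\!\mathcal N})w=(F,G)+\mathfrak O_R(F,G)$ with $\|\mathfrak O_R\|\to 0$, and inverts $I+\mathfrak O_R$ by Neumann series. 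Existence, uniqueness and the uniform bound all come out at once; no Fredholm index computation or weak limit is used.

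Your proposal has a real gap exactly where you flag it. The uniform-in-$R_k$ tail estimate for $\psi v_k$ on $(N,R_k)\times\Omega$ cannot be read off from $\|({^\lambda\!\Delta}_\beta-\mu_0)^{-1}\|$ on $\mathcal M$: the Dirichlet wall at $\{R_k\}\times\Omega$ is invisible to the infinite problem, and $\psi v_k$ does not extend across it to an element of $\mathscr D({^\lambda\!\Delta}_\beta)$ (only the trace, not the normal derivative, vanishes there). Nor does the Fourier template~\eqref{opo}--\eqref{op} apply directly, since that is for the full line with no endpoint condition. What you actually need is precisely the paper's Lemma~\ref{al}: the model operator $\Delta_\Omega-(1+\lambda)^{-2}\partial_x^2-\mu_0$ on $(-\infty,R)\times\Omega$ with Dirichlet at $R$, handled by odd reflection about $x=R$ and then Fourier transform, giving an estimate independent of $R$. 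With that in hand your contradiction argument can be closed: extend $\psi v_k$ by zero to $(-\infty,R_k)\times\Omega$ (this is $H^2$ since $\psi$ vanishes to infinite order at $x=N$), apply Lemma~\ref{al} to the model, and absorb the variable-coefficient remainder $({^\lambda\!\Delta}-\Delta_\Omega+(1+\lambda)^{-2}\partial_x^2,\ {^\lambda\!\mathcal N}-\partial_\eta)$ using~\eqref{STAB-} for $N$ large. But note that once you have Lemma~\ref{al} and the weighted isomorphism~\eqref{ad}, the paper's direct construction is shorter and yields the constant $C$ explicitly, whereas your compactness step is an extra layer that buys nothing.
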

In the proof of Theorem~\ref{thMR} we rely on  compound expansions.
This requires construction of an approximate solution to~\eqref{MR}
compounded of solutions to limit problems,
e.g.,~\cite{KozlovMazyaRossmann}. Being substituted into~\eqref{MR}
the approximate solution leaves a discrepancy, which tends to zero
as ${R}$ increases. In contrast to the case of the Dirichlet
Laplacian~\cite{Kalvin SIMA}, here  the discrepancy left in the
boundary conditions on $\partial\mathcal M_{R}^-$ cannot be made
small for large ${R}$ if we use homogeneous limit problems. As the
first limit problem we take the problem with infinite PMLs and
non-homogeneous boundary conditions on $\partial \mathcal M$. In the
next lemma we study the second limit problem.

 Introduce the weighted Sobolev space $H^k_{\beta}((-\infty,{R})\times\Omega)$  as the completion of the set $C_c^\infty((-\infty,{R})\times\Omega)$ in the norm
$$
\|\mathsf u\|_{
H^k_{\beta}((-\infty,{R})\times\Omega)}=\left(\sum_{\ell\leq
k}\int_{-\infty}^{R} \|e^{-\beta x}\partial_x^\ell\mathsf
u(x)\|^2_{H^{k-\ell}(\Omega)}\,dx\right)^{1/2}.
$$
We also set
$L^2_\beta\bigl((-\infty,{R})\times\Omega\bigr)=H^0_{\beta}((-\infty,{R})\times\Omega)$
and denote by $H^{1/2}_\beta((-\infty,{R})\times\partial\Omega)$ the
space of traces on $(-\infty,{R})\times\partial\Omega$ of the
functions in $H^1_\beta((-\infty,{R})\times\Omega)$.
\begin{lemma}\label{al}
Assume that $\lambda\in\mathcal O_\alpha\setminus\Bbb R$,
$\mu_0\in\Bbb R\setminus\sigma(\Delta^N_\Omega)$, and $\beta$ is in
the interval~\eqref{interval}. Then for any $\mathsf f\in
L^2_\beta\bigl((-\infty,{R})\times\Omega\bigr)$ and $\mathsf g\in
H_\beta^{1/2}((-\infty,{R})\times\partial\Omega)$ there exists a
 solution $\mathsf u$ to
 the second limit problem
\begin{equation}\label{eq}
(\Delta_\Omega-(1+\lambda)^{-2}\partial^2_x -\mu_0)\mathsf u=\mathsf
f\text{ on } (-\infty,{R})\times\Omega,\quad
\partial_\eta \mathsf u=\mathsf g \text{ on }
(-\infty,{R})\times\partial\Omega,\quad
 \mathsf u=0\text{ on }
\{{R}\}\times\Omega,
\end{equation}
satisfying the estimate
\begin{equation}\label{eqest} \bigl\|\mathsf
u\|_{ H^2_{\beta}((-\infty,{R})\times\Omega)}\leq C\bigl(\|\mathsf
f\|_{ L^2_\beta((-\infty,{R})\times\Omega)}+\|\mathsf g\|_{
H_\beta^{1/2}((-\infty,{R})\times\partial\Omega)}\bigr),
\end{equation}
 where the constant $C$ is independent of $\mathsf f$ and $\mathsf
g$. Here $\partial_\eta$ is the operator of the Neumann boundary
conditions on $(-\infty,{R})\times\partial\Omega$ induced by the
product metric $dx\otimes dx+\mathfrak h$ on
$(-\infty,{R})\times\Omega$.
\end{lemma}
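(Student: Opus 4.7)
The strategy is to reduce the mixed problem on $(-\infty,{R})\times\Omega$ to the full-cylinder isomorphism
\begin{equation*}
\bigl(\Delta_\Omega-(1+\lambda)^{-2}(\partial_x+\beta)^2-\mu_0,\partial_\eta\bigr):H^2(\Bbb R\times\Omega)\to L^2(\Bbb R\times\Omega)\times H^{1/2}(\Bbb R\times\partial\Omega)
\end{equation*}
established by~\eqref{op} in the course of proving Proposition~\ref{ess}; the assumption that $\beta$ lies in the interval~\eqref{interval} is precisely what rules $\mu_0$ out of the essential spectrum of the conjugated operator via~\eqref{eq9}, so this map is invertible. I would first reduce to homogeneous Neumann data by a standard trace-lifting combined with a smooth cutoff near $x={R}$: one constructs $\mathsf u_1\in H^2_\beta((-\infty,{R})\times\Omega)$ with $\partial_\eta \mathsf u_1=\mathsf g$, $\mathsf u_1|_{x={R}}=0$, and $\|\mathsf u_1\|_{H^2_\beta}\le C\|\mathsf g\|_{H^{1/2}_\beta}$. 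Looking for $\mathsf u=\mathsf u_1+\mathsf u_2$ converts the lemma into the same problem with $\mathsf g=0$ and a new right-hand side in $L^2_\beta$.

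Next I would remove the weight by substituting $\mathsf u_2=e^{\beta x}w$, producing the equivalent unweighted problem
\begin{equation*}
\bigl(\Delta_\Omega-(1+\lambda)^{-2}(\partial_x+\beta)^2-\mu_0\bigr)w=\tilde{\mathsf f}\in L^2,\qquad \partial_\eta w=0,\qquad w|_{x={R}}=0,
\end{equation*}
on $(-\infty,{R})\times\Omega$. Extending $\tilde{\mathsf f}$ by zero to $\Bbb R\times\Omega$ and applying the full-line inverse above yields $\tilde w\in H^2(\Bbb R\times\Omega)$ that solves the PDE and the Neumann condition on the whole cylinder but generally leaves a nonzero trace $\tilde w({R},\cdot)\in H^{3/2}(\Omega)$.

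The final step is to annihilate this trace by adding a solution of the homogeneous PDE that decays at $-\infty$. I would expand $\tilde w({R},y)=\sum_j c_j\Phi_j(y)$ in Neumann eigenfunctions $\Phi_j$ of $\Delta_\Omega^N$ with eigenvalues $\nu_j$, and set
\begin{equation*}
w_h(x,y)=\sum_j c_j\,e^{r_j(x-{R})}\Phi_j(y),
\end{equation*}
where $r_j$ is the unique root of $(r+\beta)^2=(1+\lambda)^2(\nu_j-\mu_0)$ with $\Re r_j>0$. Writing $\sigma_j=(1+\lambda)\sqrt{\nu_j-\mu_0}$, the identity $|\Re\sigma_j|=|\Im\{(1+\lambda)\sqrt{\mu_0-\nu_j}\}|$ shows that the hypothesis on $\beta$ in~\eqref{interval} is exactly $|\Re\sigma_j|>\beta$ for every $j$, which both forces existence and uniqueness of such $r_j$ and gives the uniform lower bound $\inf_j\Re r_j>0$ together with $\Re r_j\sim|1+\lambda|\sqrt{\nu_j}$ for large $j$. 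A direct Plancherel computation in the $\{\Phi_j\}$-basis then yields $\|w_h\|_{H^2((-\infty,{R})\times\Omega)}\le C\|\tilde w({R},\cdot)\|_{H^{3/2}(\Omega)}$, and $w=\tilde w-w_h$ solves the reduced problem with the estimate~\eqref{eqest}. Uniqueness follows by the same modal analysis: a solution of the homogeneous problem in $H^2_\beta$ must consist only of the decaying modes $e^{r_j(x-{R})}\Phi_j$, whose coefficients are forced to vanish by $w|_{x={R}}=0$.

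The main obstacle I anticipate is the careful bookkeeping in the final step, namely verifying that the $j$-dependence of $r_j$ is compatible with the $H^2$-norm of $w_h$ matching the $H^{3/2}$-norm of its trace; all the relevant estimates hinge on the uniform lower bound $\inf_j\Re r_j>0$ furnished by the interval~\eqref{interval}, without which no separation of the two exponential modes at $-\infty$ would be possible.
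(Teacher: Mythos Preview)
Your argument is correct, but the paper takes a tidier route that sidesteps most of your bookkeeping. After translating to $R=0$ and approximating by smooth compactly supported data, the paper extends $(\mathsf f,\mathsf g)$ \emph{oddly} across $x=0$ to all of $\Bbb R\times\Omega$; the full-line solution~\eqref{fourier} obtained from $\mathcal F^{-1}(\Delta_\Omega+(1+\lambda)^{-2}\xi^2-\mu_0,\partial_\eta)^{-1}\mathcal F$ is then automatically odd in $x$ and hence vanishes on $\{0\}\times\Omega$, so no trace correction is needed. The weighted estimate comes from shifting the Fourier contour from $\Im\xi=0$ to $\Im\xi=\beta$, which is legitimate precisely because the interval~\eqref{interval} keeps $\mu_0-(1+\lambda)^{-2}\xi^2$ away from $\sigma(\Delta_\Omega^N)$ throughout the strip. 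Compared with your approach, the odd-reflection trick eliminates both the preliminary reduction to $\mathsf g=0$ and the explicit modal subtraction $w_h$, and the contour shift replaces your conjugation $\mathsf u_2=e^{\beta x}w$; what your argument buys in return is an explicit description of the decaying half-line modes and a uniqueness statement, neither of which the paper spells out. The delicate $H^{3/2}$--$H^2$ matching you flag does go through (one has $\Re r_j\asymp(1+\nu_j)^{1/2}$ and $|r_j|^2\asymp 1+\nu_j$, so each mode contributes $|c_j|^2(1+\nu_j)^{3/2}$ to $\|w_h\|_{H^2}^2$), but the paper's symmetry argument makes this computation unnecessary.
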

\begin{proof} Without loss of generality we can assume that ${R}=0$ (the
general case can be obtained by the change of variables $x\mapsto
x-{R}$). We give only a sketch of the proof as it is essentially
based on a well known argument, details can be found e.g.
in~\cite{KozlovMazyaRossmann}.

  Assume that $(\mathsf f,\mathsf g)\in
C_c^\infty(\Bbb R_-\times\Omega)\times C_c^\infty(\Bbb
R_-\times\partial \Omega)$ and extend it to a function in
$C_c^\infty(\Bbb R\times\Omega)\times C_c^\infty(\Bbb
R\times\partial \Omega)$ by setting $\bigl(\mathsf f,\mathsf
g\bigr)(-x) =-\bigl(\mathsf f,\mathsf g\bigr)(x)$ for $x<0$.
 The Fourier transform
$\mathcal F_{x\mapsto \xi}(\mathsf f,\mathsf g)$ is an entire
function of $\xi$ with values in $L^2(\Omega)\times
H^{1/2}(\partial\Omega)$ decaying faster than $|\xi|^{-k}$ with any
$k>0$ as $\xi$ tends at infinity in any strip $|\Im\xi|<C$. Since
$\mu_0-(1+\lambda)^{-2}\xi^2$ with $0\leq\Im\xi<\beta$ is not an
eigenvalue of $\Delta^N_\Omega$ and the strongly elliptic operator
$\Delta_\Omega-(1+\lambda)^{-2}\partial^2_x$ on $\Bbb R\times\Omega$
with the operator $\partial_\eta$ on $\Bbb R\times\partial\Omega$
set up a regular elliptic problem, the elliptic coercive estimate
\begin{equation}\label{estF}
\sum_{p=0}^2|\xi|^{2p}  \|\Psi\|_{ H^{2-p}(\Omega)}^2 \leq  C
\bigl(\|((1+\lambda)^{-2}\xi^2+\Delta_\Omega -\mu) \Psi\|^2_{
L^2(\Omega)} +|\xi|\|\partial_\eta \Psi\|_{
L^2(\partial\Omega)}^2+\|\partial_\eta \Psi\|_{
H^{1/2}(\partial\Omega)}^2\bigr)
\end{equation}
with parameter $0\leq\Im\xi<\beta$ is valid. Moreover, the operator
$$
\bigl(\Delta_\Omega+(1+ \lambda)^{-2}\xi^2 -\mu_0, \partial_\eta
\bigr): H^2(\Omega)\to H^0(\Omega)\times H^{1/2}(\partial\Omega),
\quad 0\leq \Im\xi<\beta,
$$
yields an isomorphism and
\begin{equation}\label{fourier}
\mathsf u(x)=\mathcal F^{-1}_{\xi\mapsto x} \bigl(\Delta_\Omega+(1+
\lambda)^{-2}\xi^2 -\mu_0,
\partial_\eta \bigr)^{-1} \mathcal F_{x\mapsto \xi}(\mathsf f,\mathsf
g)
\end{equation}
is a unique in $L^2(\Bbb R\times\Omega)$ solution to the Neumann
problem
$$
(\Delta_\Omega-(1+\lambda)^{-2}\partial_x^2-\mu_0)\mathsf u=\mathsf
f \text{ on } \Bbb R\times\Omega, \quad
\partial_\eta \mathsf u=\mathsf g\text{ on } \Bbb
R\times\partial\Omega.
$$
Usual argument on smoothness of solutions to elliptic problems gives
$\mathsf u\in C^\infty(\Bbb R\times\Omega)$. Since $x\mapsto
\bigl(\mathsf f,\mathsf g\bigr)(x)$ is odd it follows that $\mathcal
F_{x\mapsto \xi}(\mathsf f,\mathsf g)$ is an odd function of $\xi$.
Therefore
 $x\mapsto\mathsf u(x)$ is odd and
$\mathsf u(0)=0$.

The Cauchy integral theorem and the estimate~\eqref{estF} allow to
replace the contour of integration $\Bbb R$ in~\eqref{fourier} by
$\{\xi\in\Bbb C:\Im\xi=\beta\}$. Then
 the Parseval equality implies that $\mathsf u$ is a
solution to the problem~\eqref{eq} satisfying the
estimate~\eqref{eqest}. By continuity our construction extends to
all $\mathsf f\in L^2_\beta\bigl(\Bbb R_-\times\Omega\bigr)$ and
$\mathsf g\in H_\beta^{1/2}(\Bbb R_-\times\partial\Omega)$.
\end{proof}
Now we are in position to prove Theorem~\ref{thMR}.
\begin{proof} The proof is carried out using the compound expansion method. We say that $w\in
H^2(\mathcal M_{R})$ is an approximate solution to the
non-homogeneous problem
\begin{equation}\label{MR1}
({^\lambda\!\Delta}-\mu_0)v=F \text{ on } \mathcal
M_{R},\quad{^\lambda\!\mathcal N} v=G \text{ on } \partial \mathcal
M_{R}^-,\quad v=0 \text{ on } \{{R}\}\times\Omega
\end{equation}
 if the following conditions are satisfied:
\begin{itemize}
\item[i.] The estimate
$$
\|w\|_{ H^2(\mathcal M_{R})} \leq c\|(F,G)\|_{L^2(\mathcal
M_{R})\times H^{1/2}(\partial\mathcal M_{R}^-)}
$$
holds with an independent of $F$, $G$, and ${R}$ constant $c$;
\item[ii.] The estimate
\begin{equation}
\|({^\lambda\!\Delta}-\mu_0,{^\lambda\!\mathcal N})w-(F,G)\|_{
L^2(\mathcal M_{R})\times H^{1/2}(\partial\mathcal M_{R}^-)} \leq
C_{R}\|(F,G)\|_{L^2(\mathcal M_{R})\times H^{1/2}(\partial\mathcal
M_{R}^-)}
\end{equation}
is valid, where the constant $C_{R}$ is independent of $F$ and $G$
and $C_{R}\to 0$ as ${R}\to+\infty$.
\end{itemize}
Due to Condition i $w_{R}$ continuously depends on $F$ and $G$.
Condition ii implies that the discrepancy, left by $w_{R}$ in the
problem~\eqref{MR1}, tends to zero as ${R}\to+\infty$. Once an
approximate solution is found, it is not hard to verify the
assertion of the theorem.

Let $\rho\in C^\infty (\Bbb R)$ be a cutoff function such that
$\rho(x)=1$ for $x\leq 0$ and $\chi(x)=0$ for $x\geq 1/2$. We set
$\rho_{R}=\rho(x-{R})$ on $\Pi$ and $\varrho_{R}=1$ on $\mathcal
M\setminus\Pi$. Let $(f,g)=\varrho_{{R}/2} (F,G)$ and $(\mathsf
f,\mathsf g)=(1-\varrho_{R})(F,G)$. Extend $(F,G)$ from $\mathcal
M_{R}\times\partial\mathcal M_{R}^-$ to $\mathcal
M\times\partial\mathcal M$ and $(\mathsf f,\mathsf g)$ from
$(0,{R})\times\Omega\times(0,{R})\times\partial\Omega$ to
$(-\infty,{R})\times\Omega\times(-\infty,{R})\times\partial\Omega$
by zero. We already know that
$\mu_0\notin\sigma({^\lambda\Delta_\beta})$ and hence~\eqref{ad}
yields an isomorphism for $\beta$ in the interval
in~\eqref{interval} (see the proof of Theorem~\ref{T1}.2). We find
an approximate solution $w$ compounded of
$u_{\lambda}=e^{-\beta\mathsf
s}({^\lambda\!\Delta_\beta}-\mu_0,{^\lambda\!\mathcal
N_\beta})^{-1}e^{\beta\mathsf s}(f,g)$ and a solution $\mathsf u\in
H^2_\beta((-\infty,{R})\times\Omega)$ to the equation~\eqref{eq} in
the form
$$
w=\varrho_{R} u_{\lambda}+(1-\varrho_{{R}/3})\mathsf u;
$$
here the second term in the right hand side is extended from
$(-\infty,{R})\times\Omega$ to $\mathcal M_{R}$ by zero. On the
support of $(f,g)$ we have $e^{\beta\mathsf s}\leq C e^{\beta
{R}/2}$ and on the support of $(\mathsf f,\mathsf g)$ we have
$e^{-\beta x}\leq C e^{-\beta {R}/2}$ uniformly in ${R}$. Hence
\begin{equation}\label{rhs}
\begin{aligned}
\|e^{\beta\mathsf s}(f,g)\|_{ L^2(\mathcal M)\times H^{1/2}(\partial
M)}+e^{\beta {R}}\bigl\|(\mathsf f,\mathsf g)\|_{
H^0_\beta((-\infty,{R})\times\Omega)\times
H^{1/2}_\beta((-\infty,{R})\times\partial\Omega)}
\\
\leq C e^{\beta {R}/2}\| (F,G)\|_{ L^2(\mathcal M_{R})\times
H^{1/2}(\partial\mathcal M_{R}^-)}
\end{aligned}
\end{equation}
with an independent of ${R}$ and $(F,G)$ constant $C$. Thanks
to~\eqref{eqest} and~\eqref{rhs} we can conclude that
$$
\begin{aligned}
\|w\|_{ H^2(\mathcal M_{R})}^2\leq \|\varrho_{R}
u_{\lambda}\|_{H^2(\mathcal M)}^2+\|(1-\rho_{{R}/3})\mathsf
u\|_{H^2((-\infty,{R})\times\Omega)}^2 \leq C\| (F,G)\|^2_{
L^2(\mathcal M_{R})\times H^{1/2}(\partial\mathcal M_{R}^-)}
\end{aligned}
$$
and Condition i is satisfied.

Let us verify Condition ii. We have
\begin{equation}\label{9.5}
\begin{aligned}
({^\lambda\!\Delta}-\mu_0,{^\lambda\!\mathcal
N})w-(F,G)=[({^\lambda\!\Delta},{^\lambda\!\mathcal
N}),\varrho_{R}]u_\lambda+\bigl((1+\lambda)^{-2}[\partial^2_x,\rho_{{R}/3}]\mathsf
u,0\bigr)
\\
+\bigl({^\lambda\!\Delta}-\Delta_\Omega+(1+\lambda)^{-2}\partial^2_x,{^\lambda\!\mathcal
N}-\partial_\eta\bigr)(1-\rho_{{R}/3})\mathsf u.
\end{aligned}
\end{equation}
On the support of the commutator
$[({^\lambda\!\Delta},{^\lambda\!\mathcal N}),\varrho_{R}]u_\lambda$
the weight $e^{\beta\mathsf s}$ is bounded from below by $c e^{\beta
{R}}$ uniformly in ${R}>0$. As a consequence,
\begin{equation}\label{9.6}
\bigl\|[({^\lambda\!\Delta},{^\lambda\!\mathcal
N}),\varrho_{R}]u_\lambda\bigr\|_{ L^2(\mathcal M_{R})\times
H^{1/2}(\partial\mathcal M_{R}^-)}\leq C_1 e^{-\beta
{R}}\|e^{\beta\mathsf s}u_{\lambda}\|_{H^2(\mathcal M)}\leq C_2
e^{-\beta {R}}\|e^{\beta\mathsf s}(f,g)\|_{ L^2(\mathcal M)\times
H^{1/2}(\partial\mathcal M)}.
\end{equation}
 Now we estimate the second term in
the right hand side of~\eqref{9.5}. On the support of
$[\partial^2_x,\rho_{{R}/3}]\mathsf u$ we have $e^{-\beta x}\geq C
e^{-\beta {R}/3}$. Thanks to~\eqref{eqest} we obtain
\begin{equation}\label{9.7}
\begin{aligned}
\bigl\|(1+\lambda)^{-2}[\partial^2_x,\rho_{{R}/3}]\mathsf u\bigr\|_{
L^2(\mathcal M_{R})}\leq C_1 e^{\beta {R}/3} \bigl\|
u;H^2_\beta\bigl((-\infty,{R})\times\Omega\bigr)\bigr\| \\  \leq
C_2e^{\beta {R}/3}\bigl\|(\mathsf f,\mathsf g)\|_{
H^0_\beta((-\infty,{R})\times\Omega)\times
H^{1/2}_\beta((-\infty,{R})\times\Omega)}.
\end{aligned}
\end{equation}
Finally, consider the last term in the right hand side
of~\eqref{9.5}. On the support of $(1-\rho_{{R}/3})\mathsf u$ the
coefficients of the operator
$\bigl({^\lambda\!\Delta}-\Delta_\Omega+(1+\lambda)^{-2}\partial^2_x,{^\lambda\!\mathcal
N}-\partial_\eta\bigr)$ uniformly tend to zero as ${R}\to +\infty$;
see~\eqref{STAB-} and~\eqref{oper},~\eqref{bc}. This together
with~\eqref{eqest} gives
\begin{equation}\label{9.8}
\begin{aligned}
\bigl\|
\bigl({^\lambda\!\Delta}-\Delta_\Omega+(1+\lambda)^{-2}\partial^2_x,{^\lambda\!\mathcal
N}-\partial_\eta\bigr)(1-\rho_{{R}/3})\mathsf u\bigr\|_{L^2(\mathcal
M_{R})\times H^{1/2}(\partial\mathcal M_{R}^-)} \\
\leq c_{R}\bigl\|(\mathsf f,\mathsf
g)\|_{H^2_0((-\infty,{R})\times\Omega)\times
H^{1/2}_0((-\infty,{R})\times\partial\Omega)},
\end{aligned}
\end{equation}
where $c_{R}\to 0$ as ${R}\to+\infty$. From~\eqref{rhs}--\eqref{9.8}
it follows that $w$ meets Condition ii. Thus $w$ is  an approximate
solution to the problem~\eqref{MR1}.

Now we are in position to prove the assertion of the theorem.
Observe that $({^\lambda\!\Delta},{^\lambda\!\mathcal
N})w-(f,g)=\mathfrak O_{R} (f,g)$ with some operator $\mathfrak
O_{R}$ in $L^2(\mathcal M_{R})\times H^{1/2}(\partial \mathcal
M_{R}^-)$, whose norm $\|\mathfrak O_{R}\|$ tends to zero as
${R}\to+\infty$ because of Condition ii on $w$. For all ${R}>{R}_0$
we have $\|\mathfrak O_{R}\|\leq\epsilon<1$. Hence there exists the
inverse $(I+\mathfrak O_{R})^{-1}$ and its norm is bounded by the
constant $1/(1-\epsilon)$ uniformly in ${R}>{R}_0$. We set $(\tilde
f,\tilde g)=(I+\mathfrak O_{R})^{-1}(\tilde f,\tilde g)$. In the
same way as before we construct the approximate solution $w$ for the
problem~\eqref{MR1}, where $(f,g)$ is replaced by $(\tilde f,\tilde
g)$. Then for $v=w$ we have
$({^\lambda\!\Delta}-\mu_0,{^\lambda\!\mathcal N})v=(\tilde f,\tilde
g)+\mathfrak O_{R} (\tilde f,\tilde g)=(f,g)$ and
$$
 \|v;H^2(\mathcal
M_{R})\|\leq c\|(\tilde f,\tilde g)\|_{L^2(\mathcal M_{R})\times
H^{1/2}(\partial\mathcal M_{R}^-)}\leq
c/(1-\epsilon)\|(f,g)\|_{L^2(\mathcal M_{R})\times
H^{1/2}(\partial\mathcal M_{R}^-)},$$ where $C$ is independent of
${R}>{R}_0$. In particular for ${R}>{R}_0$ and $f\in L^2(\mathcal
M_{R})$ there exists a solution $v\in H^2(\mathcal M_{R})$ to the
problem~\eqref{MR} satisfying~\eqref{estMR}. This solution is unique
as a similar argument shows that the adjoint problem is solvable in
$H^2(\mathcal M_{R})$ for any right hand side in $L^2(\mathcal
M_{R})$.
\end{proof}

In contrast to infinite PMLs, finite PMLs are not non-reflective.
Reflections produce a non-zero difference in $\mathcal M_{r}$
between solutions $u_\pm$ satisfying the limiting absorption
principle~\eqref{LAP} and solutions $v\in H^2(\mathcal M_{R})$ to
the problem~\eqref{MR} with $F=f_\lambda$ and $\Im\lambda\lessgtr
0$. In the next theorem we prove that this difference (error) decays
with an exponential rate as ${R}\to+\infty$. In other words, we
prove exponential convergence of the PML method. The problem with
finite PMLs can be solved numerically with the help of finite
element solvers; discretization produces yet another error that we
do not estimate here.
\begin{theorem}\label{convergence}  Let  $\lambda\in\mathcal
O_\alpha\setminus\Bbb R$ and let $\beta$ be in the interval
in~\eqref{interval}. Assume that $\mu_0\in\Bbb
R\setminus\sigma(\Delta^N_\Omega)$ is not an eigenvalue of the
selfadjoint Neumann Laplacian $^0\!\Delta$ in $L^2(\mathcal M)$.
Then there exists ${R}_0>0$ such that  for any $f\in\mathcal A$ and
for all ${R}>{R}_0$ a unique solution $v_{R}\in H^2_0(\mathcal
M_{R})$ of the problem~\eqref{MR} with $F=f_\lambda$
 converges on  $\mathcal
M_{r}$
to the outgoing solution $u_-$ (resp. the incoming solution $u_+$)
of the equation $({^0\!\Delta}-\mu_0)u=f$ in the case $\Im\lambda>0$
(resp. $\Im\lambda<0$)
in the  sense that as ${R}\to+\infty$ the estimate
\begin{equation}\label{fin}
\|u_\pm-v_{R}\|_{ H^2(\mathcal M_{r})}^2\leq Ce^{-2\beta
{R}}\|e^{\beta\mathsf s}f_\lambda\|^2
\end{equation}
holds with a constant $C$ independent of ${R}>{R}_0$ and $f$.
\end{theorem}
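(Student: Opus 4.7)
The plan is to compare $v_R$ directly with the infinite-PML solution $u_\lambda := ({^\lambda\!\Delta} - \mu_0)^{-1} f_\lambda$ supplied by Theorem~\ref{T1}; recall that $u_\lambda$ already coincides with $u_\pm$ on $\mathcal M\setminus(r,\infty)\times\Omega$ and, by~\eqref{interval}, decays exponentially in the PML region. Setting $w_R:=u_\lambda-v_R$ on $\mathcal M_R$, one checks that $w_R$ satisfies the \emph{homogeneous} interior equation $({^\lambda\!\Delta}-\mu_0)w_R=0$ with $^\lambda\!\mathcal N w_R=0$ on $\partial\mathcal M_R^-$, but with the nonzero Dirichlet trace $w_R\!\upharpoonright_{\{R\}\times\Omega}=u_\lambda\!\upharpoonright_{\{R\}\times\Omega}$. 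The strategy is to lift this trace by a cutoff and to control the remainder via Theorem~\ref{thMR}.

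Concretely, pick $\chi\in C^\infty(\mathbb R)$ with $\chi(x)=1$ for $x\geq 0$ and $\chi(x)=0$ for $x\leq -1$, and set $\chi_R(x):=\chi(x-R+1)$, so that $\chi_R\equiv 1$ in a neighborhood of $\{R\}\times\Omega$ inside $\mathcal M_R$ and $\chi_R\equiv 0$ for $x\leq R-2$. Introduce $\tilde w_R:=w_R-\chi_R u_\lambda\in H^2(\mathcal M_R)$. Then $\tilde w_R$ vanishes on $\{R\}\times\Omega$, and on $\mathcal M_R$ it solves the non-homogeneous problem~\eqref{MR1} with
\[
 F=-[{^\lambda\!\Delta},\chi_R]u_\lambda,\qquad G=-[{^\lambda\!\mathcal N},\chi_R]u_\lambda,
\]
both supported in the strip $[R-2,R-1]\times\Omega$ (in particular away from the corner $\partial\Omega\times\{R\}$). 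Applying the stability estimate for~\eqref{MR1}---which is proved in Theorem~\ref{thMR} via compound expansions---yields
\[
 \|\tilde w_R\|_{H^2(\mathcal M_R)}\leq C\bigl(\|F\|_{L^2(\mathcal M_R)}+\|G\|_{H^{1/2}(\partial\mathcal M_R^-)}\bigr)\leq C\|u_\lambda\|_{H^2([R-2,R-1]\times\Omega)},
\]
uniformly in $R>R_0$, since $\chi_R$ is a translate of a fixed cutoff and the commutators have $R$-independent coefficients.

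The exponential smallness of the right-hand side is then extracted from~\eqref{interval}. On the supports of $\chi_R'$ and of $\chi_R$ the weight $\mathsf s$ satisfies $\mathsf s(x)\geq R-c_0$ for some fixed $c_0>0$, so
\[
 \|u_\lambda\|_{H^2([R-2,R]\times\Omega)}\leq c\,e^{-\beta R}\|e^{\beta\mathsf s}u_\lambda\|_{H^2(\mathcal M)}\leq C\,e^{-\beta R}\|e^{\beta\mathsf s}f_\lambda\|
\]
by Theorem~\ref{T1}.2. The same bound controls $\|\chi_R u_\lambda\|_{H^2(\mathcal M_R)}$, so altogether
\[
 \|w_R\|_{H^2(\mathcal M_R)}\leq \|\tilde w_R\|_{H^2(\mathcal M_R)}+\|\chi_R u_\lambda\|_{H^2(\mathcal M_R)}\leq C\,e^{-\beta R}\|e^{\beta\mathsf s}f_\lambda\|.
\]
Since $\mathcal M_r$ lies inside $\mathcal M\setminus(r,\infty)\times\Omega$, where $u_\lambda\equiv u_\pm$ by Theorem~\ref{T1}.2, restriction to $\mathcal M_r$ and squaring give~\eqref{fin}.

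The principal technical point is to ensure that the stability constant in the inhomogeneous problem~\eqref{MR1} is genuinely independent of $R>R_0$: Theorem~\ref{thMR} is stated for $G\equiv 0$, but its compound-expansion proof is written for~\eqref{MR1}, so one only needs to track that both $\|F\|_{L^2}$ and $\|G\|_{H^{1/2}}$ enter the final bound with $R$-independent constants. A secondary bookkeeping issue is to verify that the Neumann commutator $[{^\lambda\!\mathcal N},\chi_R]u_\lambda$, viewed as a trace on $\partial\mathcal M_R^-$, is controlled in $H^{1/2}$ by $\|u_\lambda\|_{H^2([R-2,R-1]\times\Omega)}$; this follows from the trace theorem together with the fact that $\chi_R$ depends only on the axial variable $x$.
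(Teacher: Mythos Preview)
Your argument is essentially the paper's: writing $\tilde w_R=(1-\chi_R)u_\lambda-v_R$, this is exactly the function $\rho_R u_\lambda - v_R$ the paper works with (for $\rho_R=1-\chi_R$), and both proofs then appeal to the stability estimate of Theorem~\ref{thMR} together with the exponential decay~\eqref{interval}. One small correction: since $({^\lambda\!\Delta}-\mu_0)(\chi_R u_\lambda)=\chi_R f_\lambda+[{^\lambda\!\Delta},\chi_R]u_\lambda$, your right-hand side should be $F=-\chi_R f_\lambda-[{^\lambda\!\Delta},\chi_R]u_\lambda$; the extra term $-\chi_R f_\lambda$ is supported on $[R-2,R]\times\Omega$ and satisfies $\|\chi_R f_\lambda\|\leq Ce^{-\beta R}\|e^{\beta\mathsf s}f_\lambda\|$, so the final bound is unaffected. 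Your explicit handling of the Neumann commutator $G=-[{^\lambda\!\mathcal N},\chi_R]u_\lambda$ and your remark that Theorem~\ref{thMR} is really proved for the non-homogeneous problem~\eqref{MR1} are points the paper's own proof passes over silently.
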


Let us recall that the set $\mathcal A$ of analytic functions is
dense in $L^2(\mathcal M)$. In particular, our construction and
Theorem~\ref{convergence} remain valid for any $f\in L^2(\mathcal
M)$ supported in $\mathcal M_{r}$.

\begin{proof} By Theorem~\ref{T1} it suffices
to prove the estimate~\eqref{fin} with $u_\pm$ replaced by $
u_{\lambda}$. Let $\rho\in C^\infty (\Bbb R)$ be a cutoff function
such that $\rho(x)=1$ for $x\leq 0$ and $\chi(x)=0$ for $x\geq 1/2$.
We set $\rho_{R}(x)=\rho(x-{R})$ on $\Pi$ and $\rho_{R}=1$ on
$\mathcal M\setminus\Pi$.  Then $\rho_{R} u_{\lambda}-v_{R}=
u_{\lambda}-v_{R}$ on $\mathcal M_R$ and the difference $\rho_{R}
u_{\lambda}-v_{R}\in H^2(\mathcal M_{R})$ satisfies~\eqref{MR} with
$F=(\rho_{R}-1)f_{\lambda}+[{^\lambda\!\Delta},\rho_{R}]u_{\lambda}$.
Observe that
$$
\|(\varrho_{R}-1)f_{\lambda}\|_{ L^2(\mathcal M_{R})}\leq C
e^{-\beta {R}}\|e^{\beta\mathsf s}f_{\lambda}\|, \quad
 \|[{^\lambda\!\Delta},\rho_{R}]u_{\lambda}\|_{L^2(\mathcal
M_{R})}\leq C e^{-\beta {R}}\|e^{\beta \mathsf s}
u_{\lambda}\|_{H^2(\mathcal M)}.
$$
This together with~\eqref{interval} gives
\begin{equation}\label{fin-2}
\|F\|_ {L^2(\mathcal M_{R})}\leq c e^{-\beta {R}}\|e^{\beta\mathsf
s}f_{\lambda}\|.
\end{equation}
By Theorem~\ref{thMR} we have
$$
\|\rho_{R} u_{\lambda}-v_{R}\|_{ H^2(\mathcal M_{R})}\leq C
\|F\|_{L^2(\mathcal M_{R})},\quad {R}>{R}_0.
$$
This together with
$$
\|u_\pm-v_{R}\|_{L^2(\mathcal M_{r})}\leq \|\rho_{R}
u_{\lambda}-v_{R}\|_{H^2(\mathcal M_{R})},\quad {R}>{R}_0>{r},
$$
 and~\eqref{fin-2} completes the proof of~\eqref{fin}.
\end{proof}

\section*{Funding} This work was supported by  the Academy of Finland  [108898].

\end{document}